\newif\ifspringer\springertrue
  \let\svendproof=\endproof
  \def\endproof{\qed\svendproof}
  \newtheorem{theorem}{Theorem}[section]
  \newtheorem{lemma}{Lemma}[section]
  \newtheorem{corollary}{Corollary}[section]
\newenvironment{acknowledgement}{\paragraph{Acknowledgements}}{}
  \newcommand{\keywords}[1]{\paragraph{Key words} #1}
  \let\qed=\bbox
\newcommand\EE{\mathsf{E}}
\newcommand\II{\mathcal{I}}
\newcommand\PP{\mathsf{P}}
\newcommand\U{\mathcal{U}}
\newcommand\V{\mathcal{V}}
\def\DD{\displaystyle}
\def\egaldef{\stackrel{\mbox{\tiny def}}{=}}  
\def\wt{\widetilde}
\newcommand\1{\leavevmode\hbox{\rm \small1\kern-0.35em\normalsize1}}
\newcommand\ind[1]{\1_{\{#1\}}}
\newcommand{\limdec}{\mathop{\mathrm{lim}\scriptstyle\searrow}\limits}
\title{Birth and Death Processes on Certain Random Trees:
Classification and Stationary Laws} 
  \titlerunning{Birth and Death Processes on Certain Random Trees}
  \author{Guy Fayolle \and Maxim Krikun \and Jean-Marc Lasgouttes
  \thanks{J.-M. Lasgouttes worked partly on the present study while
  spending a sabbatical at EURANDOM in Eindhoven.}} \institute{Guy
  Fayolle \at INRIA, Domaine de Voluceau, Rocquencourt BP 105, 78153
  Le Chesnay Cedex, France. \email{Guy.Fayolle@inria.fr} \and Maxim
  Krikun \at Laboratory of Large Random Systems, Faculty of
  Mathematics and Mechanics, Moscow State University, 119899, Moscow,
  Russia. \email{krikun@lbss.math.msu.su} \and Jean-Marc Lasgouttes
  \at INRIA, Domaine de Voluceau, Rocquencourt BP 105, 78153 Le
  Chesnay Cedex, France. \email{Jean-Marc.Lasgouttes@inria.fr}}
  \author{
    Guy Fayolle\thanks{\texttt{Guy.Fayolle@inria.fr} -- INRIA -- Domaine de 
    Voluceau, Rocquencourt BP 105, 78153 Le Chesnay Cedex, France} 
  \and Maxim Krikun \thanks{\texttt{krikun@lbss.math.msu.su} --
    Laboratory of Large Random Systems -- Faculty of Mathematics and 
    Mechanics, Moscow State University, 119899, Moscow, Russia}
  \and Jean-Marc Lasgouttes\thanks{\texttt{Jean-Marc.Lasgouttes@inria.fr} --
    INRIA -- Domaine de Voluceau, Rocquencourt BP 105, 78153 Le
    Chesnay Cedex, France.} \thanks{J.-M. Lasgouttes worked partly
    on the present study while spending a sabbatical at EURANDOM in
    Eindhoven.}} 
\date{Received May 2002, revised August 2003}
\begin{document}
\maketitle

\begin{abstract}  
The main substance of the paper concerns the growth rate and the
classification (ergodicity, transience) of a family of random trees.
In the basic model, new edges appear according to a Poisson process of
parameter $\lambda$ and leaves can be deleted at a rate $\mu$. The
main results lay the stress on the famous number $e$. A complete
classification of the process is given in terms of the intensity
factor $\rho=\lambda/\mu\,$: it is ergodic if $\rho\leq e^{-1}$, and
transient if $\rho>e^{-1}$. There is a phase transition phenomenon:
the usual region of null recurrence (in the parameter space) here does
not exist. This fact is rare for countable Markov chains with
exponentially distributed jumps. Some basic stationary laws are
computed, e.g.~the number of vertices and the height. Various bounds,
limit laws and ergodic-like theorems are obtained, both for the
transient and ergodic regimes. In particular, when the system is
transient, the height of the tree grows linearly as the time
$t\to\infty$, at a rate which is explicitly computed. Some of the
results are extended to the so-called multiclass model.
\end{abstract}

\keywords{Random trees, ergodicity, transience, nonlinear differential
    equations, phase transition}

\section{Introduction and model description}\label{NOTATION}

So far, very few results seem to exist for random trees as soon as
insertions and deletions are simultaneously permitted (see e.g.
\cite{MAH}). We shall study  one of the simplest models in this
class, which offers both interesting and non trivial properties.
Broadly speaking, one might think of a vertex as being a node of a
network (e.g.~the Internet) or of some general data structure. This
paper is a self-contained continuation of the report~\cite{FAY}.

Let $G=\{G(t),t\geq 0\}$ be a continuous time Markov chain with state
space the set of finite directed trees rooted at some fixed vertex
$v_{0}$. 

Throughout the study, the \emph{distance} between two vertices is the
number of edges in the path joining them, and the \emph{height} $h(v)$
of a vertex $v$ is the distance from the root. The set of vertices
having the same height $k$ form the $k$-th
\emph{level} of the tree, the root $v_{0}$ being at level $0$. Hence
the height of $G$ is a stochastic process $\{H_G(t),t\geq
0\}$, where
\[ 
H_G(t)\egaldef\max_{v\in G(t)} h(v).
\]
$N_G(t)$ will stand for the \emph{volume} of $G(t)$ (\emph{i.e.} its
total number of vertices).

Wherever the meaning is clear from the context, we shall omit the
subscript $G$ and simply write $H$ or $N$. The \emph{indegree} of a vertex
$v$ is the number of edges starting at $v$ and a vertex with indegree
$0$ is a \emph{leaf}. Finally, we will also need the classical notion
of \emph{subtree} with root $v$, which goes without saying.

At time $t=0$, $G(0)$ consists of the single vertex $v_{0}$. Then at
time $t>0$, the transitions on $G$ are of two types:

\begin{itemize} 
\item \textbf{Adjunction.} At each vertex $v$, a new edge having its
origin at $v$ can be appended to the tree at the epochs of a Poisson
process with parameter $\lambda>0$. In this case, the \emph{indegree}
of $v$ is increased by one and the new edge produces a new leaf.
\item \textbf{Deletion.} From its birth, a leaf (but the root) can
be deleted at a rate $\mu$. In other words, a vertex \emph{as long as
it has no descendant} has an exponentially distributed lifetime with
parameter $\mu \geq 0$.
\end{itemize}

\subsection{Organization of the paper, results and related studies} 
Section~\ref{DELETE} is devoted to the birth and death model described
above, with $\lambda, \mu>0$. An exact and complete classification of
$G$ is given. Indeed, necessary and sufficient conditions are derived
for the process to be ergodic ($\mu\ge\lambda e$) or transient
($\mu<\lambda e$). A phase transition phenomenon is enlightened, which
corresponds precisely to the absence of a null recurrence region.

When the system is ergodic, the stationary distributions of the volume and
 of the height of the tree are computed in
 section~\ref{DISTRIBUTIONS}.

Section~\ref{GROWTH} deals with limit laws and scalings for $H_G(t)$
and $N_G(t)$ in the transient case. The main outcome is a kind of
ergodic theorem for $H_G(t)$, valid for any $\mu\ge0$. It allows, in
the particular case $\mu=0$ (pure birth-process), to rediscover the
magic growth rate $\lambda e$, originally derived e.g.\ in
\cite{DEV,PIT}.

Finally, section~\ref{MULTI} proposes an extension to a multiclass
model, in which the parameters of the process depend possibly on the
class, the key result being a qualitative theorem for ergodicity.

Recently, the authors were made aware of a model studied
in~\cite{PUH1,LIG}. The setting considered there is a contact process
on a $d$-ary ordered tree, also known as a Catalan tree, where $d\geq
2$ is an arbitrary \emph{finite} integer. The main difference with our
model resides in the fact that each empty descendant of an occupied
vertex can become occupied at a rate $\beta>0$. The classification of
the process was obtained for $d=2$ in~\cite{PUH1}, and~\cite{LIG}
extends the result to any finite $d$, but nothing was said for
$d=\infty$. It turns out that most of the points presented in our
study (ergodicity, zero-one laws, etc.) cannot be obtained by simply
letting $d\to\infty$ in~\cite{PUH1,LIG}. Likewise, reversibility
arguments (which should theoretically lead to explicit invariant measures)
used in the latter papers do not seem to be effective when $d$ is
infinite (see section~\ref{DISTRIBUTIONS}).

\section{The  birth and death case: $\lambda>0,\mu>0$} 
\label{DELETE}
The random tree $G$ evolves according to the rules given in the
introduction, the first important question being to find exact
conditions for this process to be recurrent or transient. Main results
in this respect are stated in theorem~\ref{TH2}.

For convenience, we define the \emph{lifetime} $\tau_v$ of an
arbitrary vertex $v$, which measures the length of the time interval
between the birth and the death of $v$ (for consistency
$\tau_v=\infty$ if $v$ is never erased).

\begin{lemma}\label{SYS}
All vertices, but  the root, have the same lifetime distribution
$p(t)$, which satisfies the following system (S) 
\begin{eqnarray}
\beta(t) & = & \DD \mu\exp\Bigl\{-\lambda\int_0^t(1-p(x))dx\Bigr\},
                                                    \label{EQU1} \\[0.2cm] 
\beta(t) & = & \DD\frac{dp(t)}{dt} + \int_0^t \beta(t-y)dp(y), \label{EQU2}
\end{eqnarray}
with the initial condition $p(0)=0$.
\end{lemma}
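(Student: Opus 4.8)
The plan is to read $p$ as the common distribution function of the lifetime $\tau_v$ and to build an auxiliary point process whose \emph{first} atom is $\tau_v$; computing the intensity of that process in two different ways will produce the two expressions for $\beta$ in system~(S). First I would argue that all non-root vertices share one lifetime law. Since every transition is local, the subtree rooted at a non-root vertex $v$ evolves independently of the rest of $G$, and its dynamics from the instant $v$ is born as an isolated leaf depend only on $\lambda$ and $\mu$, not on the position or birth epoch of $v$. Hence $\tau_v$ has a law free of $v$; set $p(t)\egaldef\PP(\tau_v\le t)$ and $q=1-p$, with $p(0)=0$ immediate. The key device is then to reset local time so that $v$ is born at $0$ and to introduce the \emph{immortal} version, in which $v$ may never die but still emits children at rate $\lambda$, each child spawning a faithful, independent copy of the process (so each child lives an independent time of law $p$). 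Let $L(t)=\ind{v\text{ is a leaf at }t}$, and let $\Pi$ be the thinning of an independent rate-$\mu$ Poisson process to the random set $\{t:L(t)=1\}$. Coupling the true and immortal dynamics up to the first death shows that $\tau_v$ is exactly the first atom of $\Pi$, so $\Pi$ encodes $p$; I define $\beta(t)$ to be the intensity (first-moment density) of $\Pi$.

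Equation~\eqref{EQU1} follows by a direct Poisson computation in the immortal model. The birth times of the children of $v$ form a rate-$\lambda$ Poisson process, and a child born at time $s$ is alive at time $t$ with probability $q(t-s)$; by independent thinning the number of living children at $t$ is Poisson with mean $\lambda\int_0^t q(u)\,du$, whence $\PP(L(t)=1)=\exp\{-\lambda\int_0^t q(u)\,du\}$. Because $\Pi$ is a rate-$\mu$ thinning, by an independent clock, of the set of leaf-instants, its intensity is $\beta(t)=\mu\,\PP(L(t)=1)$, which is precisely \eqref{EQU1}.

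Equation~\eqref{EQU2} comes from a regeneration argument identifying $\Pi$ as an ordinary renewal process with inter-atom law $p$. Every atom of $\Pi$ falls at a leaf-instant, and just after such an instant the subtree of $v$ is reduced to the single vertex $v$, i.e.\ to the configuration present at birth. By the strong Markov property together with the self-similarity already used, the process restarts in law at each atom, so the successive inter-atom gaps are i.i.d.\ with the law of the first atom, namely $p$. Consequently $\beta$, being the intensity of this renewal process, is its renewal density and obeys the standard renewal equation $\beta(t)=p'(t)+\int_0^t\beta(t-y)\,dp(y)$, which is \eqref{EQU2}. Equating the two expressions for $\beta$ yields system~(S).

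The step I expect to be the main obstacle is the rigorous justification of this regeneration: one must check that a leaf-instant configuration is genuinely identical in law to the initial one, that the strong Markov property legitimately applies at the random atoms of $\Pi$, and that the resulting ordinary renewal process has intensity \emph{exactly} $\beta$ rather than a delayed variant. Setting up the immortal coupling so that $\tau_v$ is faithfully the first atom of $\Pi$, and verifying the independence invoked in the thinning computations, also demand care.
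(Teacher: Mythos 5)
Your proposal is correct and follows essentially the same route as the paper: your ``immortal'' vertex with the thinned rate-$\mu$ point process $\Pi$ is exactly the paper's extension of $X_v(t)$ with ``$\mu$-events,'' your intensity $\beta(t)=\mu\,\PP(L(t)=1)$ is the paper's definition $\beta(t)=\mu\PP\{X_v(t)=0\}$, and both proofs get (\ref{EQU1}) from the \textsc{m/g/$\infty$}-type Poisson computation and (\ref{EQU2}) by regenerating at the first $\mu$-event (your renewal-density equation is precisely the paper's decomposition of $\PP\{X_v(t)=0\}$ over $\{\tau_v\ge t\}$ and $\{\tau_v<t\}$).
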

\begin{proof} 
Let $v$ be a particular vertex of $G(t)$ and consider the related
random subtree with root $v$. Its evolution does not depend on
anything below $v$, as long as $v$ exists. Therefore all these
subtrees are identically distributed and, accordingly, their vertices
have the same lifetime distribution.

To capture more precisely the evolution of the process, we associate
with each vertex $v$ with age $t$ its number $X_v(t)$ of direct
descendants (i.e.\ who are located at a distance $1$ from $v$).

At rate $\lambda$, a vertex $v$ produces descendants whose lifetimes
are independent, with the common distribution $p(t)$. As soon as
$X_v(t)=0$, $v$ can die at rate $\mu$, in which case the process of
production stops.  

It is actually useful to extend $X_v(t)$ for all $t\ge0$ by deciding
that, instead of deleting $v$, a \emph{$\mu$-event} occurs without
stopping the production of descendants. With this convention, the
number of descendants of the root vertex $v_0$ evolves as
$X_{v_0}(t)$, for all $t\ge 0$. Let $\tau_v$ denote the random epoch
of the first $\mu$-event, which is distributed according to $p(t)$.

Clearly the process $X_v$ is regenerative with respect to the
$\mu$-events. Thus the random variables $X_v(t)$ and $X_v(\tau_v+t)$
have the same distribution.

For any fixed $t$, we write down a sum of conditional probabilities,
expressing the fact that $v$ had exactly $k$ descendants, who all have
died in $[0,t]$, their birth-times being independent and uniformly
spread over $[0,t]$. This yields at once equation (\ref{EQU1}),
since
\begin{eqnarray}
\PP\{X_v(t)=0\} 
   & = &\sum_{k=0}^\infty \frac{e^{-\lambda t}(\lambda t)^k}{k!} 
             \bigg(\int\limits_0^{t} \frac{p(x)dx}{t}\bigg)^k \nonumber \\ 
& = &\exp\Big\{-\lambda\int\limits_0^{t}(1-p(x))dx\Big\}. \label{PX0}
\end{eqnarray}

By means of a regenerative argument, it is also possible to rewrite
the above probability in another way, starting from the decomposition
\begin{equation}\label{DECOMP1}
\PP\{X_v(t)=0\}
  = \PP\{X_v(t)=0,\tau_v\ge t\} + \PP\{X_v(t)=0,\tau_v < t\}.
\end{equation}
In fact, we have the trite relations
\[ 
\begin{cases}
\DD\frac{dp(t)}{dt} = \mu\PP\{X_v(t)=0,\tau_v\ge t\},\\[0.3cm]
\PP\{X_v(t)=0,\tau_v < t\} = \PP\{X_v(t-\tau_v)=0,\tau_v < t\}, \\[0.3cm]
\PP\{\tau_v\in(y,y+dy)\} = dp(y) ,
\end{cases}
\]
which yield in particular,
\[
\PP\{X_v(t)=0,\tau_v < t\} = \int_0^{t}
\PP\{X_v(t - y)=0\}dp(y).
\]
Hence, putting $\beta(t)\egaldef\mu\PP\{X_v(t)=0\}$, one sees that
(\ref{DECOMP1}) corresponds term by term to (\ref{EQU2}). The proof
of the lemma is concluded.
\end{proof} 

It is convenient to introduce now $\tau\egaldef\tau_{v_0}$, which is
the random variable representing the epoch of the first $\mu$-event
for the root of the tree. Nonetheless in the sequel, especially in
sections \ref{DISTRIBUTIONS} and \ref{GROWTH}, $\tau$ will also often
refer to the lifetime of an arbitrary generic vertex $v$, owing to the
fact that all these quantities have the same distributions.

We are ready to state the main result of this section.

\begin{theorem}\label{TH2} \mbox{ }
\begin{itemize}
\item[(A)] The Markov chain $G$ is ergodic if, and only if,  
\begin{equation}\label{ERG}
\rho\egaldef\frac{\lambda}{\mu} \leq \frac{1}{e}\,.
\end{equation}
\item[(B)] When the system is ergodic, the mean lifetime
 $m\egaldef\EE \tau$ is given by 
\[
m = \frac{r}{\lambda},
\]
where $r\leq 1$ denotes the smallest root of the equation 
\begin{equation}\label{ROOT} 
 re^{-r}=\rho     
\end{equation}
and represents the mean number of descendants of an arbitrary vertex
at steady state.
\item[(C)] When $\rho > \DD \frac{1}{e}$, then the system is
transient. In this case, 
\[
  \lim_{t\to\infty} p(t)\egaldef \ell<1.
\] 
As a rule, $x$ being the positive root of $xe^x=\rho^{-1}$, we
have for any $\rho$
\[x \le \ell \le \min \Bigl(1,\frac{1}{\rho}\Bigr)\quad  \mathrm{and}\ 
\lim_{\rho\to\infty} \rho\ell =1 .
\]
\end{itemize}
\end{theorem}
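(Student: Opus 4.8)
The plan is to reduce the classification of the tree-valued chain $G$ to the tail behaviour of the generic lifetime $\tau$, and then to read off that behaviour from the nonlinear system (S) of Lemma~\ref{SYS}. The crucial observation is that, in the extended dynamics used in the proof of Lemma~\ref{SYS}, the number of direct descendants of the root, $X_{v_0}(t)$, is exactly an $M/G/\infty$ queue with Poisson arrivals of rate $\lambda$ and service-time distribution $p$, and that the empty state $\{v_0\}$ is visited precisely when $X_{v_0}=0$. Hence $G$ is ergodic iff this queue is positive recurrent, i.e.\ iff the service time has finite mean, equivalently $\ell\egaldef\lim_{t\to\infty}p(t)=1$ together with $m\egaldef\EE\tau<\infty$; if instead $\ell<1$, a positive fraction of descendants never die and the tree grows unboundedly (transience). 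Everything thus reduces to deciding, from (S), whether $p$ is proper with finite mean and, if so, computing $m$, and otherwise computing the defect $\ell$.

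For parts (A) and (B) I would pass to Laplace transforms. Writing $\widetilde p(s)=\int_0^\infty e^{-st}\,dp(t)$ and $\widehat\beta(s)=\int_0^\infty e^{-st}\beta(t)\,dt$, equation (\ref{EQU2}) transforms into $\widehat\beta=\widetilde p+\widehat\beta\,\widetilde p$, so that
\[ \widehat\beta(s)=\frac{\widetilde p(s)}{1-\widetilde p(s)}. \]
The singularity at $s\to0^+$ is then read two ways. On one hand $\widetilde p(0)=\ell$, and if $\ell=1$, $m<\infty$ then $1-\widetilde p(s)\sim ms$, so $\widehat\beta(s)\sim 1/(ms)$. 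On the other hand, from (\ref{EQU1}) one has $\phi(t)\egaldef\int_0^t(1-p(x))\,dx\to m$ and $\beta(t)\to\mu e^{-\lambda m}$, whence an Abelian argument gives $\widehat\beta(s)\sim\mu e^{-\lambda m}/s$. Matching, and setting $r\egaldef\lambda m$, produces $\mu e^{-r}=1/m=\lambda/r$, that is $re^{-r}=\rho$, equation~(\ref{ROOT}). Since $r\mapsto re^{-r}$ increases on $[0,1]$ from $0$ to its maximum $e^{-1}$ and decreases afterwards, a finite-mean root $r\le1$ exists iff $\rho\le e^{-1}$, and at $\rho=e^{-1}$ one gets $r=1$, i.e.\ $m=1/\lambda<\infty$: criticality is still ergodic, which is precisely the absence of a null-recurrence region. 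With the reduction above this yields (A) and (B), $r=\lambda m$ being the stationary mean number of descendants by a rate-times-mean-lifetime balance.

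For the transient regime (C), when $\rho>e^{-1}$ no finite-mean solution exists, so $\ell<1$ and $\phi(\infty)=\infty$, making $\beta$ integrable. Integrating (\ref{EQU2}) over $[0,\infty)$ gives $B=\ell+B\ell$ with $B\egaldef\int_0^\infty\beta(t)\,dt$, i.e.\ $B=\ell/(1-\ell)$. Crude bounds on $\phi$ in $B=\mu\int_0^\infty e^{-\lambda\phi(t)}\,dt$ settle the easy assertions: $p\le\ell$ gives $\phi(t)\ge(1-\ell)t$, hence $B\le[\rho(1-\ell)]^{-1}$ and $\ell\le 1/\rho$ (with $\ell\le1$ trivial, so $\ell\le\min(1,1/\rho)$); while $p\ge0$ gives $\phi(t)\le t$, hence $B\ge1/\rho$ and $\ell\ge(1+\rho)^{-1}$, which already squeezes $\rho\ell$ between $\rho/(1+\rho)$ and $1$ and proves $\lim_{\rho\to\infty}\rho\ell=1$. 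The sharp lower bound $\ell\ge x$, with $x$ the positive root of $xe^{x}=\rho^{-1}$, is the remaining point: for this I would use the regenerative/branching decomposition of the queue, namely $\ell=[1+\rho(1-a)]^{-1}$ where $a=\widetilde p(\lambda(1-a))$ is the probability that a busy period terminates, and estimate $a$ from below.

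The genuinely delicate steps are threefold. First, the nonlinearity in the exponent $\exp\{-\lambda\int_0^t(1-p)\}$ of (\ref{EQU1}) means one must first establish existence, uniqueness and enough regularity of the solution $p$ of (S), presumably by a monotone-iteration or contraction scheme, so that the Abelian/Tauberian matching is legitimate. Second, excluding null recurrence — showing that $\ell=1$ actually forces $m<\infty$ throughout $\rho\le e^{-1}$, rather than merely being consistent with it — requires pinning down the exact order of the singularity of $\widehat\beta$ at the origin, not just its leading term. Third, and I expect hardest, is the sharp lower bound $\ell\ge x$: the elementary estimates stop at $\ell\ge(1+\rho)^{-1}$, and closing the gap to $xe^{x}=\rho^{-1}$ needs the finer fixed point $a=\widetilde p(\lambda(1-a))$ together with a convexity or coupling comparison for the transform $\widetilde p$, which is the principal technical obstacle.
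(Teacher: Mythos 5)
Your skeleton (classification via the generic lifetime distribution, Laplace transforms, matching of singularities at $s\to0$) is the same as the paper's, and your necessity argument for (A) is essentially the paper's equation (\ref{NEC}): assuming ergodicity, match $\lim_{s\to0}s\beta^*(s)=1/m$ against $\mu e^{-\lambda m}$ to get $\rho=\lambda m\,e^{-\lambda m}\le e^{-1}$. But the two implications that carry the theorem are asserted, not proved. First, \emph{sufficiency}: the existence of a root $r\le1$ of (\ref{ROOT}) is only a consistency condition that $\lambda m$ would satisfy \emph{if} $m$ were finite; it does not show that the unique solution $p$ of (S) is proper with finite mean when $\rho\le e^{-1}$. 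The paper closes this gap with the specific monotone scheme (\ref{ITE}), started from $\beta_0\equiv\mu$ (i.e.\ a fictitious $p_{-1}\equiv1$): each iterate $p_k$ is a genuine distribution with finite mean $m_k$, the iterates decrease by an \textsc{m/g/$\infty$} coupling, the scaled means obey $r_{k+1}=\rho e^{r_k}$ with $r_0=\rho$, which converges exactly when $\rho\le e^{-1}$, and Beppo Levi transfers this to the limit, giving $m=r/\lambda<\infty$. A generic ``monotone-iteration or contraction scheme'' establishing existence, uniqueness and regularity (your first delicate step) does not do this job; what is needed is a scheme whose \emph{means} converge to the root. Second, \emph{transience}: your inference ``no finite-mean solution exists, so $\ell<1$'' conflates non-ergodicity with transience and silently discards the null-recurrent alternative $\ell=1$, $m=\infty$. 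Excluding it is the hardest step of the paper's proof: assuming $\ell=1$ and $\rho>e^{-1}$, one derives the integro-differential equation (\ref{TRANS2}) for $\varepsilon$, traps its solutions between extremal solutions of the ODE (\ref{TRANS5}), and concludes that the (monotone) limit of $\varepsilon'(t)$ is strictly positive, contradicting $\varepsilon'(t)=\lambda(1-p(t))\to0$. Nothing in your proposal substitutes for this; your second ``delicate step'' even locates the issue in the wrong region ($\rho\le e^{-1}$ instead of $\rho>e^{-1}$).

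On part (C), the bound $\ell\ge x$ is left open by your own admission, and the identity you propose to reach it, $\ell=[1+\rho(1-a)]^{-1}$ with $a$ a fixed point of $a=\int_0^\infty e^{-\lambda(1-a)t}dp(t)$, is itself unjustified: this vertex process has no \textsc{m/g/1}-type busy-period functional equation, and the paper obtains $\ell\ge x$ by an entirely different mechanism --- the second scheme (\ref{ITE1}), whose trees have immortal leaves at level $k$, the tail domination $q_k(t)\ge a_k(1-e^{-b_kt})+o(e^{-b_kt})$ with recursion (\ref{REC}) (lemma~\ref{LEVEL2}, resting on the incomplete-gamma representation (\ref{eq:gamma}) and on lemma~\ref{MONO}), and the limits (\ref{AB}). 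On the credit side, your elementary squeeze is a genuine simplification for the last assertion of (C): integrating (\ref{EQU2}) gives $B\egaldef\int_0^\infty\beta(t)dt=\ell/(1-\ell)$, and the bounds $(1-\ell)t\le\int_0^t(1-p(x))dx\le t$ yield $1/(1+\rho)\le\ell\le1/\rho$, hence $\lim_{\rho\to\infty}\rho\ell=1$ without needing the finer estimate $x\ge\rho^{-1}-\rho^{-2}$ used in the paper --- but this argument is only available once transience ($\ell<1$, so that $B<\infty$) has been established, which is precisely the step your proposal lacks.
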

The proof of the theorem is spread over the next two subsections.

\subsection{Ergodicity}\label{ERGODICITY} 
Relying on  standard theory of Markov chains with countable state
space (see~\cite[vol. I]{FEL}), we claim the system ergodic if, and
only if, $m<\infty$. As a matter of fact, the $\mu$-events are
regeneration points for the process $X(t)$, which represents exactly
the number of descendants of the \emph{root} $v_0$. Hence when
$\EE\tau<\infty$ (i.e. $\beta(\infty)>0$), the event $\{X(t)=0\}$
has a positive probability, so that $G$ is ergodic. Conversely, if $\EE
\tau=\infty$ then $X(t)$ is transient and so is $G$.

For an arbitrary positive function $f$, denote by $f^*$ its ordinary
Laplace transform
\[f^*(s)\egaldef \int_0^{\infty} e^{-st}f(t)dt, \quad \Re (s) \geq 0.
\]
Later on we will also need the associated inversion formula (see
e.g.~\cite{FUC})
\begin{equation}\label{LAP}
f(t) =\frac{1}{2i\pi} \int_{\sigma-i\infty}^{\sigma+i\infty}
e^{st}f^*(s)ds, \quad \Re (\sigma) >0.
\end{equation}
To show the necessity of condition (\ref{ERG}), we suppose $G$ is
ergodic. In this case, by~(\ref{EQU1}), the quantity $\DD
\lim_{t\to\infty} \beta(t)$ does exist and
\[\mu e^{-m}\leq\beta(t)\leq \mu ,\]
 so that we can apply a limiting relation of Abelian type (see
e.g.~\cite{FUC}). Hence equations (\ref{EQU1}) and (\ref{EQU2})---the
latter belonging to the Volterra class---yield respectively
\begin{equation}\label{NEC}
\begin{cases} \DD\lim_{t\to\infty}\beta(t) = \lim_{s\to 0} s\beta^*(s) = 
\lim_{s\to 0}\frac{s^2p^*(s)}{1-sp^*(s)} = \frac{1}{m} ,\\[0.3cm]
\DD\lim_{t\to\infty}\beta(t) = \mu e^{-\lambda m},
\end{cases}
\end{equation}
whence the equality
\[
\rho = \lambda m e^{-\lambda m}.
\]
As the function $xe^{-x}$ reaches its maximum $e^{-1}$ at $x=1$, we
conclude that necessarily $\rho\leq e^{-1}$.

In order to prove the sufficiency of (\ref{ERG}), we have to get a
deeper insight into system (S). There will be done along two main
steps.

\paragraph{(a)} Although (S) reduces to a second order nonlinear
integro-diffe\-rential equation, this does not help much. What is more
useful is that all derivatives $p^{(n)}(0)$, $\beta^{(n)}(0)$, taken at
the the origin in the complex $t$-plane, can be recursively computed
for all $n$. This can be checked at once, rewriting (\ref{EQU1}) in
the differential form
\begin{equation}\label{EQU3}
\frac{d\beta(t)}{dt}+ \lambda (1-p(t))\beta(t)=0.
\end{equation}  
Noticing the derivatives $p^{(n)}(0)$ and $\beta^{(n)}(0)$
have alternate signs when $n$ varies, it is direct to verify that
$\beta$ and $p$ are analytic functions around the origin, and that
their respective power series have a non-zero radius of convergence.
The first singularities of $p$ and $\beta$ are on the negative real
axis, but not easy to locate precisely. Thus (S) has a solution,
which is unique, remarking also that uniqueness is a mere consequence
of the Lipschitz character of $dp(t)/dt$ with respect to $\beta$ in
the Volterra integral equation (\ref{EQU2}) (see e.g.~\cite{CAR}). En
passant, it is worth noting that the solution in the whole complex
plane---which is not really needed for our purpose---could be obtained
by analytic continuation directly on system (S).

\paragraph{(b)} When (\ref{ERG}) holds, the next stage consists in
exhibiting a \emph{non-defective} probabilistic solution $p(t)$
[necessarily unique by step (a)], with a finite mean
$m<\infty$.  This is more intricate and will be achieved by
constructing a converging iterative scheme.

Consider the system 
\begin{equation}\label{ITE}
\begin{cases}
 \beta_{0}(t) & = \ \mu, \quad t\ge 0 \,, \\[0.2cm] \beta_k(t) & = \
 \DD\frac{dp_{k}(t)}{dt} + \int_0^t \beta_{k}(t-y) dp_{k}(y),
 \\[0.3cm] \beta_{k+1} (t) & = \
 \DD\mu\exp\Bigl\{-\lambda\int_0^t\bigl(1-p_k(y)\bigr)dy\Bigr\},\\[0.3cm]
 p_k(0) & = \ 0, \ \forall k \ge 0\,.
\end{cases}
\end{equation}
 The second equation in (\ref{ITE}) is equivalent to
\begin{equation}\label{CONV}
sp_k^*(s)= \frac{\beta_k^*(s)}{1+\beta_k^*(s)},
\end{equation}
allowing to derive $p_k$ from $\beta_k$ by means of (\ref{LAP}) (see
also~\cite{FEL} for various inversion formulas in the real plane).
Then the computational algorithm becomes simple:
\begin{enumerate}
\item $p_0(t)= 1-e^{-\mu t}$.

\item Compute $\beta_1(t)= \mu\exp\bigl[-\rho (1-e^{-\mu
t})\bigr]$.

\item Compute $p_1(t)$, then $\beta_2(t), p_2(t)$, etc.
\end{enumerate}
In the scheme (\ref{ITE}), the initial condition $\beta_0(t)=\mu$ is
tantamount to take an implicit fictitious function, say $p_{-1}$,
satisfying $p_{-1}(t)=1,\ \forall t\ge 0$.

At each step, the successive $p_k$'s are non-defective probability
distributions, with finite means denoted by $m_k$.  The scheme
(\ref{ITE}) enjoys two nice properties.

\textbf{(i)} It is monotone \emph{decreasing}. Suppose $p_k(t)\leq
p_{k-1}(t)$, which is in particular true for $k=1$. In the third
equation of (\ref{ITE}), $\beta_{k+1}(t)/\mu$ is simply the
probability of being empty for an \textsc{m/g/$\infty$} queue with
arrival rate $\lambda$ and service time distribution function~$p_k$.
It is therefore possible, by a coupling argument, to build two
\textsc{m/g/$\infty$} queues corresponding to $\beta_k(t)$ and
$\beta_{k+1}(t)$ such that the $\mu$-event pertaining to level $k+1$
will always occur later than the one for level $k$. Thus
$p_{k+1}(t)\leq p_k(t)$.

So, the positive sequences $\{p_k(t), \beta_k(t), k\geq 0\}$ are
uniformly bounded and non-increasing for each fixed $t$. Consequently,
\[
p(t) = \limdec_{k\to\infty} p_k(t) \quad \textrm{and} \quad
\beta(t) = \limdec_{k\to\infty} \beta_k(t)
\]
form the unique solutions of (S). 

\textbf{(ii)} Letting $r_k\egaldef\lambda m_k$ and combining the two main
equations of (\ref{ITE}), we get
\[
r_{k+1}=\rho e^{r_k}, \ \forall k\ge 0, \quad \textrm{with} \
r_0=\rho.
\]
When $\rho\leq e^{-1}$, the $r_k$'s form an increasing sequence of
positive real numbers, with a finite positive limit $r$ satisfying
equation (\ref{ROOT}). Since $1- p_k(t)$ is also an increasing
sequence of positive functions, the theorem of Beppo Levi ensures the
equality
\begin{equation}\label{LIM}
\int_{0}^{\infty} (1-p(t))dt = \lim_{k\to\infty}\int_{0}^{\infty}
(1-p_k(t))dt = \lim_{k\to\infty} m_k = \frac{r}{\lambda}.
\end{equation}
It is worth to point out that (\ref{ITE}) is equivalent to the
construction of a sequence of trees $\{G_k,k\ge 0\}$, such that, for
any finite $k$, $G_k$ is ergodic and has a height not greater than
$k$.

This completes the proof of points (A) and (B) of the theorem. \qed

\subsection{Transience}\label{TRANSIENCE}
It turns out that the classification of the process for $\rho>e^{-1}$
can be obtained  from analytic arguments.

Recalling that $\DD \ell =  \lim_{t\to\infty} p(t)$, we define
\begin{equation}\label{TRANS0}
\DD\varepsilon(t)\egaldef\lambda\int_0^t(\ell-p(x))dx, \quad 
\lim_{t\to\infty} \uparrow \varepsilon(t)\egaldef \bar{\varepsilon},
\end{equation}
and it will be convenient to write $\varepsilon'(t)\egaldef
\frac{d\varepsilon(t)}{dt}$.

The quantity $\bar{\varepsilon}$ exists, but a priori is not
necessarily finite. It represents the mean number of descendants of an
arbitrary vertex $v$, conditioning on the fact that $v$ is the root of
an almost surely finite tree. In fact we will show that if
$\rho>e^{-1} $ then $\ell<1$, in which case the system is transient.
Thus there is no null-recurrence region in the parameter space.
However, $\bar{\varepsilon}$ given by (\ref{TRANS0}) will appear to be
\emph{finite for all $\rho>0$}.

Unless otherwise stated, $s$ in this subsection will stand for a
positive real variable. Then by a direct computation we get
\[
\beta^*(s) =\mu \int_0^\infty \exp
\bigl[-\bigl(\varepsilon(t)+(\lambda(1-\ell)+s)t\bigr)\bigr] dt ,
\]
together with the functional equation
\begin{equation}\label{TRANS}
\frac{s^2\varepsilon^*(s)}{\lambda} = \frac{\ell +
(\ell-1)\beta^*(s)}{1+ \beta^*(s)}. 
\end{equation} 

\textbf{(i)} \emph{Suppose now for a while $\ell=1$}. Then
(\ref{TRANS}) reduces to
\begin{equation}\label{TRANS1}
 s^2\varepsilon^*(s)\bigl[1 + \beta^*(s)\bigr] = \lambda ,
\end{equation}
and the idea is to show that, for $\rho>e^{-1}$, (\ref{TRANS1})
\emph{has no admissible solution, i.e. a solution such that
$\limdec_{t\to\infty}\varepsilon'(t)=0$}.

By well known theorems for Laplace transforms (see e.g. \cite{FUC}),
we have the relations
\begin{equation*}
\lim_{s\to 0}s\varepsilon^*(s) = \bar{\varepsilon},\quad
\lim_{s\to 0}s^2\varepsilon^*(s) = 0, \quad
\lim_{s\to 0}s\beta^*(s) = \mu\exp (-\bar{\varepsilon}).
\end{equation*}
When the system is ergodic, $\bar{\varepsilon}<\infty$ and the above
 limit equations give at once \begin{equation}\label{TAUBER}\lim_{s\to
 0}s^2\varepsilon^*(s)\beta^*(s)=\mu 
 \bar{\varepsilon}\exp(-\bar{\varepsilon}).
\end{equation} 
In the case $\bar{\varepsilon} =\infty$, the question is more
difficult and (\ref{TAUBER}) does not hold without additional conditions
on $\varepsilon(t)$, as for instance slow variation (see tauberian
theorems in~\cite{FEL}). The only cheap by-product of (\ref{TRANS1})
is the existence of the decreasing limit
\[\lim_{s\to0} \uparrow s^2\varepsilon^*(s)\beta^*(s) = 
\lambda - \limdec_{s\to0}s^2\varepsilon^*(s).
\]
To get deeper insight into (\ref{TRANS1}), we remark that the quantity
$s\varepsilon^*(s)\beta^*(s)$ can be viewed as the Laplace transform
of a convolution measure with density
\[
\int_0^t\varepsilon'(z)\exp[-\varepsilon(t-z)]dz,
\]
so that  (\ref{TRANS1}) is equivalent to the integro-differential equation
\begin{equation} \label{TRANS2}
\rho = \frac{1}{\mu} \varepsilon'(t)+ \int_0^t\varepsilon'(z) 
\exp[-\varepsilon(t-z)]dz.
\end{equation}
It is worth remembering that we are searching for solutions of
 (\ref{TRANS2}) in the class of functions $\varepsilon(t)$ which have
 positive monotone decreasing derivatives [this property can in fact
 be established by taking derivatives of higher order in
 (\ref{TRANS2})], and satisfy the intial condition $\varepsilon(0)=0$.
 Hence, as $t\to\infty$, the limit of the integral in (\ref{TRANS2})
 exists and \[ \lim_{t\to\infty} \int_0^t
 \varepsilon'(z)\exp[-\varepsilon(t-z)]dz = \rho -
 \limdec_{t\to\infty} \frac{\varepsilon'(t)}{\mu} >0. \] Choose $T,
 0<t<T$. Then the decomposition
\[
 \rho = \frac{1}{\mu} \varepsilon'(T) +
 \int_0^t\varepsilon'(z)\exp[-\varepsilon(T-z)]dz +
 \int_t^T\varepsilon'(z)\exp[-\varepsilon(T-z)]dz
\]
yields by monotonicity
\begin{equation} \label{TAUBER1}
\rho \le \frac{1}{\mu} \varepsilon'(T) + \varepsilon(t) 
\exp[-\varepsilon(T-t)] +
\int_t^T\varepsilon'(z)\exp[-\varepsilon(T-z)]dz.
\end{equation}
Putting $T=2t$ in (\ref{TAUBER1}) and using $\varepsilon'(2t)
\le\varepsilon'(t)$, we obtain the main inequality
\begin{equation} \label{TAUBER2} 
\rho - \varepsilon(t)\exp[-\varepsilon(t)] \le 
\varepsilon'(t) \biggl[ \frac{1}{\mu} +
\int_0^t\exp[-\varepsilon(z)]dz\biggr].
\end{equation}
On the other hand, (\ref{TRANS2}) shows immediately that the
right-hand side member of (\ref{TAUBER2}) is bounded by $\rho$.
Finally, any solution of (\ref{TRANS2}) must satisfy
\begin{equation} \label{TRANS3}
\rho -  \varepsilon(t)\exp[-\varepsilon(t)] \le \varepsilon'(t) 
\biggl[ \frac{1}{\mu} +
\int_0^t\exp[-\varepsilon(z)]dz\biggr] \le \rho.
\end{equation}
\emph{Assume also now}  $\rho>e^{-1}$. Then  
\[\sup_{t\ge0} \bigl[\rho -  \varepsilon(t) 
\exp[-\varepsilon(t)]\bigr]\egaldef\xi>0,
\]
 so that (\ref{TRANS3}) implies
\begin{equation} \label{TRANS4}
\xi \le \varepsilon'(t) \biggl[ \frac{1}{\mu} +
\int_0^t\exp[-\varepsilon(z)]dz\biggr] \le \rho.
\end{equation}
Let $a\in[\xi,\rho]$ be a real parameter and consider the differential
equation
\begin{equation} \label{TRANS5}
f'(t) \biggl[ \frac{1}{\mu} +
\int_0^t\exp[-f(z)]dz\biggr] =  a.
\end{equation}
The next step is to show that all solutions of (\ref{TRANS4}) are
located in the \emph{strip} delimited by the maximal and minimal
solutions of (\ref{TRANS5}). Setting $g(t)= \exp[-f(t)]$, a simple
calculation gives
\begin{equation} \label{DIFF}
\begin{cases}
\DD g'(t)= -a \log \bigl(1+\mu g(t)\bigr) +1\ge 0, \\[0.2cm]
\DD \int_0^{g(t)} \frac{dy}{1- a\log (1+\mu y)} = t.
\end{cases}
\end{equation}
The first equation of (\ref{DIFF}) yields $1+\mu g(t) \le\DD
e^{\frac{1}{a}}$, whence $\sup_{t\ge 0} g(t) <\infty$. On the other
hand, the second equation of (\ref{DIFF}) implies, for any fixed $t$,
that $g(t)$ (resp. $f'(t)$) is decreasing (resp. increasing) with
respect to $a$. Thus the maximal and minimal solutions of
(\ref{TRANS5}) take place respectively for $a=\rho$ and $a=\xi$, and
we have
\[
 \inf_{t\ge 0}f'(t)\ge \xi \exp [\xi^{-1}], \quad \forall a
 \in[\xi,\rho]. \] Consequently, by (\ref{TRANS4}),
 $\limdec_{t\to\infty}\varepsilon'(t)>0$, which contradicts
 null-recurrence, and shows finally that if $\rho>e^{-1}$ then
 necessarily $\ell<1$ (the announced transience).

\textbf{(ii)} Consider now the case $\ell<1$. 
Letting $s\to 0$ in (\ref{TRANS}) yields immediately
\[
\beta^*(0)= \frac{\ell}{1-\ell}.
\]
Then rewriting (\ref{TRANS}) in the form
\begin{equation}\label{TRANSbis}
\frac{s\varepsilon^*(s)}{\lambda} = \frac{(1-\ell)[\beta^*(0) - 
\beta^*(s)]}{s+ s\beta^*(s)}, 
\end{equation} 
letting again $s\to 0$ in (\ref{TRANSbis}) and using l'HÙpital's rule,
we obtain
\[
\bar{\varepsilon} = \lambda(1-\ell)^2 \int_0^\infty t\beta(t)dt <\infty .
\]

The exact computation of $\ell$ proves to be a difficult project.
Actually, since one can hardly expect more than approximate formulas,
we shall present various results, both formal and concrete, some of
them yielding bounds for $\ell$.

\subsubsection{Formal approach} 
Using the definition (\ref{TRANS0}), it appears that the right-hand
side member of (\ref{TRANS}) can be analytically continued to the
region $\Re (s) < -\lambda(1-\ell)$. Thus an analysis of singularities
becomes theoretically possible, which should hopefully allow to
compute $\ell$. We roughly sketch the method, without giving an
exhaustive presentation of all technicalities.

Owing to the inversion formula (\ref{LAP}), we can rewrite
(\ref{TRANS}) in the functional form
\begin{equation}\label{FUNC}
\frac{1}{2i\pi} \int_{\sigma-i\infty}^{\sigma+i\infty}
e^{st}\beta^*(s)ds = \mu\exp \biggl[\frac{-\lambda}{2i\pi}
\int_{\sigma-i\infty}^{\sigma+i\infty}\frac{e^{st}ds}{s^2\bigl(1+\beta^*(s)
\bigr)}\biggr],\quad \Re (\sigma) >0.  
\end{equation}
Arguing by analytic continuation in (\ref{FUNC}), it is possible to
prove that $\beta^*(s)$ is a meromorphic function with real negative
poles. Hence, $\beta(t)$ can be represented by the Dirichlet series
\begin{equation}\label{BETA}
\beta(t) = C\exp \biggl[\frac{-\lambda t}{1+\beta^*(0)}\biggr] +
\sum_{i\ge0}u_i e^{-\sigma_i t},
\end{equation}
where $C$ is a constant, the $\sigma_i$'s form a sequence of positive
increasing numbers satisfying
\[
\sigma_i> \frac{\lambda}{1+\beta^*(0)}, \quad \forall i\ge 0,
\]
and the $u_i$'s are \emph{ad hoc} residues. In the ergodic case
$\beta^*(0)=\infty$ and the first term in (\ref{BETA}) reduces to the
constant $C$. Then, $\varepsilon(t)$ could be obtained by formal
inversion of $\beta^*(s)$. Alas, the computation becomes formidable
and we did not get an exact tractable form (if any at all~!) for
$\ell$, since this is equivalent to compute $u_i, \sigma_i,i\ge0$.

\subsubsection{Bounds and tail distribution}
Beforehand, it is worth quoting some simple facts. First, the value of
$\ell$ does solely depend on $\rho$, as can be seen by scaling in
system (\ref{EQU1}--\ref{EQU2}), with the new functions
\[
 \widetilde{\beta}(t)=\frac{1}{\mu}\beta\Bigl(\frac{t}{\mu}\Bigr),
 \quad \widetilde{p}(t)=p\Bigl(\frac{t}{\mu}\Bigr).
\]
Secondly, combining  (\ref{EQU1}) and (\ref{EQU2}) leads to
the inequality
\[
 \beta(t) \le \mu - \lambda p(t),
\]
which yields
\begin{equation} \label{LBOUND}
\ell \le \min \Bigl(1,\frac{1}{\rho}\Bigr), \quad \forall \rho < \infty .
\end{equation}

In Section~\ref{ERGODICITY}, we also could have considered the scheme
\begin{equation}\label{ITE1}
\begin{cases}
 \gamma_{0}(t) & = \  \mu e^{-\lambda t} , \quad t\ge 0 \,, \\[0.2cm]
 \gamma_{k}(t) & = \ \DD\frac{dq_{k}(t)}{dt} + \int_0^t
 \gamma_{k}(t-y) dq_{k}(y), \\[0.3cm] \gamma_{k+1} (t) & = \
 \DD\mu\exp\Bigl\{-\lambda\int_0^t\bigl(1-q_k(y)\bigr)dy\Bigr\},\\[0.3cm] 
  q_k(0) & = \ 0, \ \forall k \ge 0\,,
\end{cases}
\end{equation}
which differs from (\ref{ITE}) only by its first equation, but this is
a crucial difference, corresponding in some sense to a fictitious
function $q_{-1}(t)=0,\ \forall t\ge 0$. Actually, this scheme
produces a sequence of trees $\{L_k, k\ge 0\}$, with the property that
the leaves of $L_k$ at level $k$ never die. Its basic properties are
the following:
\begin{itemize}
\item the $q_k$'s form an \emph{increasing} sequence of
\emph{defective}  distributions;
\item for all $k\ge0$, the tail distribution of $q_k$ dominates a
defective exponential distribution with  density of the form
$a_kb_ke^{-b_kt}$. Moreover, under
condition (\ref{ERG}), we have
\[
  \lim_{k\to\infty}a_k = 1,\quad\lim_{k\to\infty}b_k = \frac{\lambda}{r}
\]
and $q_k$ converges in $L_1$ to the proper distribution $p$. 
\end{itemize}

The iterative scheme (\ref{ITE1}) is convergent
\emph{for all} $\rho$, but  the distributions $q_k(t)$, $k\ge 0$,
are \emph{defective}, their limit being proper if and only if
$\rho\leq e^{-1}$. When $\rho>e^{-1}$, the limiting function $p(t)$
remains defective and
\[
 \lim_{t\to\infty}p(t)= \lim_{k\to\infty}\lim_{t\to\infty}q_k(t)=\ell <1.
\]
We shall derive bounds on $\ell$, in showing by induction that
$q_k(t)$, for $t$ sufficiently large, dominates an exponential
distribution. The idea of  proof will appear from the very first
step $k=1$. Actually, we have
\[ 
\begin{cases} 
q_0(t)= \ell_0(1-e^{-\theta_0t}) , \\[0.2cm]
\DD \ell_0
  = \frac{\mu}{\lambda+\mu}, \quad \theta_0=\lambda+\mu , \\[0.2cm] 
\DD \gamma_1(t) = \mu \exp \Bigl[-\lambda\bigl(1-\ell_0\bigr)t +
  \frac{\lambda \ell_0}{\theta_0}\big(e^{-\theta_0t}-1\bigr)\Bigr],
\end{cases} 
\]
and the Laplace transform $\gamma^*_1(s)$ has an explicit form,
based on the formula (which involves the incomplete gamma function,
see e.g.~\cite{GR})
\begin{equation} \label{eq:gamma}
  \II(x,y)=\int_0^\infty \exp \Bigl[-xt+ye^{-t}\Bigr] dt=
  \sum_{n=0}^\infty\frac{y^n}{n!(x+n)} , \quad \Re (x) >0. 
\end{equation}
By scaling, for any  constant $c>0$, we have
\begin{equation}\label{SCALE}
\frac{1}{c}\II\bigg(\frac{x}{c},y\bigg)=
\int_0^\infty \exp \Bigl[-xt+ye^{-ct}\Bigr] dt ,
\end{equation}
 so that
\[ 
\gamma^*_1(s)= \frac{\mu}{\theta_0} \exp \biggl(\frac{-\lambda
\ell_0}{\theta_0}\biggr) \II\biggl[\frac{s +\lambda
(1-\ell_0)}{\theta_0}, \frac{\lambda \ell_0}{\theta_0}\biggr], \quad
  \Re \bigl(s+ \lambda (1-\ell_0)\bigr)> 0.
\] 
 The series in equation (\ref{eq:gamma}) shows that $\gamma^*_1(s)$ can
be analytically continued as a meromorphic function of $s$, with
simple poles $s_n= -\lambda (1-\ell_0)-n, n\ge 0$.

Similarly, one checks easily  the roots in $s$ of
$\gamma^*_1(s)+1=0$ are simple, real and negative. Denoting them by
$-z_n, n\ge 0$, we have the following 

\begin{lemma}\label{LEVEL1}
\[ 
q_1^*(s)= \frac{\gamma_1^*(s)}{s(1+\gamma_1^*(s))}
\]
is a meromorphic function of $s$, with poles at $0,-z_0, -z_1,\ldots$, where
\[
\lambda (1-\ell_0) + n\theta_0 < z_n < \lambda (1-\ell_0) +
 (n+1)\theta_0, \quad n\ge 0, 
\]
with  the more precise bounds
\begin{equation}\label{ZERO}
\frac{\mu \theta_0}{\mu + \theta_0}\exp
\biggl(\frac{-\lambda\ell_0}{\theta_0}\biggr) \le z_0 -\lambda
(1-\ell_0)\le \min \biggl[\theta_0, \mu
 \exp\biggl(\frac{-\lambda\ell_0}{\theta_0}\biggr) \biggr].
 \end{equation} 
Hence
\begin{equation} \label{STEP1}
q_1(t) =  \ell_1 - \sum_{n\ge 0} r_n e^{-z_nt},
\end{equation}
where the residue $r_n$ of $q_1^*(s) e^{st}$ at the pole $z_n, n\ge 0$,
is positive and given by the linear relation
\[ 
 r_n z_n \frac{d\gamma^*_1}{ds}_{\mid s=-z_n} + 1 = 0,
\]
and 
\[
  \ell_1
 = \sum_{n\ge 0} r_n = \frac{\gamma^*_1(0)}{1+\gamma^*_1(0)}.
\]
Moreover, (\ref{STEP1}) yields
\begin{equation} \label{INDUC}
\ell_1 (1- e^{-z_0\, t}) \le  q_1(t) \le \ell_1. 
\end{equation}
\end{lemma}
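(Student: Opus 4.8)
The plan is to extract every assertion from the meromorphic structure of $\gamma_1^*$ already obtained and from a single contour inversion. The displayed identity for $q_1^*$ is merely (\ref{CONV}) written for $k=1$ and divided by $s$. Since $\gamma_1^*$ is meromorphic with the simple real poles $s_n$, so is the quotient $\gamma_1^*/(1+\gamma_1^*)$; at each $s_n$ both numerator and denominator diverge, so this quotient tends to $1$ and the $s_n$ are \emph{removable}. Hence the only poles of $q_1^*$ are $s=0$, produced by the explicit factor $1/s$, together with the zeros $-z_n$ of $1+\gamma_1^*$, which are by hypothesis simple, real and negative.

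First I would localise the $-z_n$. Between two consecutive poles $s_{n+1}<s_n$ of $\gamma_1^*$ on the real axis, every summand of (\ref{eq:gamma}), namely $(\lambda\ell_0/\theta_0)^n/\bigl(n!\,(x+n)\bigr)$, is strictly decreasing in $x=(s+\lambda(1-\ell_0))/\theta_0$, so $\gamma_1^*$ decreases strictly from $+\infty$ to $-\infty$ and meets the level $-1$ exactly once; this yields simplicity and the stated interval bounds on $z_n$ at one stroke. For the sharper estimates (\ref{ZERO}) I would use the defining equation $\gamma_1^*(-z_0)=-1$ together with (\ref{eq:gamma}), isolate the leading term $1/x$ (here $x\in(-1,0)$) and control the strictly positive remainder $\sum_{n\ge1}(\cdot)$: its mere positivity gives the upper bound $\mu\exp(-\lambda\ell_0/\theta_0)$, whereas a uniform majorisation of that remainder by $\exp(\lambda\ell_0/\theta_0)$ yields the lower bound. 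Pinning down these constants is the fussiest elementary step.

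Next I would read off (\ref{STEP1}) from the inversion formula (\ref{LAP}): push the Bromwich contour to the left and collect the residues of $e^{st}q_1^*(s)$. The residue at $s=0$ equals $\gamma_1^*(0)/\bigl(1+\gamma_1^*(0)\bigr)$, the constant $\ell_1$; the residue at $-z_n$ equals $e^{-z_nt}/\bigl(z_n\,\frac{d\gamma_1^*}{ds}\bigr)$ evaluated at $s=-z_n$, so defining $r_n$ by $r_n z_n \frac{d\gamma_1^*}{ds}\big|_{s=-z_n}+1=0$ reproduces (\ref{STEP1}), and $r_n>0$ precisely because $\gamma_1^*$ decreases through each $-z_n$, i.e.\ its derivative there is negative. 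The genuine work is analytic: one must bound $q_1^*$ on a sequence of left-going arcs, using the decay furnished by the series (\ref{eq:gamma}), to see that the arc integrals vanish and that the residue series converges --- this is the main obstacle. The value $\ell_1=\gamma_1^*(0)/(1+\gamma_1^*(0))$ is confirmed independently by the final-value relation $\lim_{t\to\infty}q_1(t)=\lim_{s\to0}sq_1^*(s)$ coming from (\ref{CONV}), while setting $t=0^+$ in (\ref{STEP1}) and using $q_1(0)=0$ forces $\ell_1=\sum_{n\ge0}r_n$.

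Finally the two-sided bound (\ref{INDUC}) is immediate from (\ref{STEP1}). Since all $r_n\ge0$ one has $q_1(t)\le\ell_1$; and because $z_0<z_1<\cdots$ gives $e^{-z_nt}\le e^{-z_0t}$, we get $\sum_{n\ge0}r_ne^{-z_nt}\le e^{-z_0t}\sum_{n\ge0}r_n=\ell_1 e^{-z_0t}$, hence $q_1(t)\ge\ell_1(1-e^{-z_0t})$.
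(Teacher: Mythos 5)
Your proposal is sound and, on most points, more detailed than the paper's own proof, which declares everything routine except the left-hand inequality of (\ref{ZERO}). The structure you use --- strict decrease of the series (\ref{eq:gamma}) between consecutive poles of $\gamma_1^*$ (giving reality, simplicity and the interlacing bounds on the $z_n$), Bromwich inversion with residues at $0,-z_0,-z_1,\dots$, positivity of the $r_n$ from the negativity of $\frac{d\gamma_1^*}{ds}$ at the crossings, then $z_n\ge z_0$ for (\ref{INDUC}) --- is exactly what the paper takes for granted. The genuine difference is the treatment of the lower bound in (\ref{ZERO}). The paper shows that the first negative root $x$ of $\mu x e^{-y}\II(x,y)+\theta_0 x=0$, with $y=\lambda\ell_0/\theta_0$, satisfies the quadratic inequality $x^2+(1+\mu/\theta_0)x+(\mu/\theta_0)e^{-y}\le 0$, whose larger root lies to the left of $-\mu e^{-y}/(\mu+\theta_0)$; you instead majorise the remainder $R(x,y)\egaldef\sum_{n\ge1}y^n/\bigl(n!(x+n)\bigr)$ by $e^y$. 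Your route does work, but with one caveat: the majorisation is \emph{not} uniform on $(-1,0)$, since $R(x,y)\to+\infty$ as $x\to-1^{+}$, so ``uniform majorisation'' is wrong as literally stated. The repair is to use your two steps in the right order: positivity of $R$ first forces the root to satisfy $-x\le(\mu/\theta_0)e^{-y}<e^{-y}$ (here $\mu<\theta_0=\lambda+\mu$ is essential), and for $x\in(-e^{-y},0)$ the elementary inequality $\frac{1}{x+n}\le\frac{1}{n(1+x)}$, valid for $n\ge1$, gives
\[
R(x,y)\le\frac{e^y-1}{1+x}<\frac{e^y-1}{1-e^{-y}}=e^y,
\]
which is precisely what your final step requires; the lower bound then follows as you indicate. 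Finally, note that the analytic justification you flag as ``the main obstacle'' (decay of $q_1^*$ on left-going arcs and convergence of the residue series) is also left unproved by the paper, which treats the Dirichlet expansion (\ref{STEP1}) as standard, so your proposal is no less complete than the original on that score.
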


\begin{proof} Only the first part of (\ref{ZERO}) needs
some explanation. It is obtained by checking that, for all $y\ge0$, the
first negative root in $x$ of the equation
\[
\mu x\exp (-y) \II(x, y) +\theta x = 0, \ y\ge 0,
\]
 satisfies
\[ 
 \begin{cases} x\theta_0 \ge - \mu e^{-y} \\[0.3cm]
\DD x^2+\bigl(1+\frac{\mu}{\theta_0}\bigr) + \frac{\mu}{\theta_0}e^{-y}\le 0.
 \end{cases}
\]
\end{proof}

We shall prove by induction that $p(t)$ dominates a \emph{reasonable}
exponential distribution. To this end, assume
\[ \ell_k (1- e^{-\theta_k\, t}) \le  q_k(t), 
\]
which is in particular true for $k=0,1$, as shown in lemma
\ref{LEVEL1}. Then the calculus which led to (\ref{INDUC}) yields also
\[ \ell_{k+1} (1- e^{-\theta_{k+1}\, t}) \le q_{k+1}(t),
\]
where $(\ell_{k+1},\theta_{k+1})$ can be derived from
$(\ell_k,\theta_k)$ by the formulas
\begin{equation}\label{LBOUND1}
\begin{cases}
\DD \alpha_k  =   \frac{\mu}{\theta_k} \exp \biggl(\frac{-\lambda
\ell_k}{\theta_k}\biggr) \II\biggl[\frac{\lambda
  (1-\ell_k)}{\theta_k},\frac{\lambda \ell_k}{\theta_k}\biggr], \\[0.5cm]
\DD \ell_{k+1}  =  \frac{\alpha_k}{1+\alpha_k}.
\end{cases}
\end{equation}
and $\theta_{k+1}$ is the first positive root of the equation
\begin{equation}\label{LBOUND2}
\frac{\mu}{\theta_k} \exp \biggl(\frac{ -\lambda
\ell_k}{\theta_k}\biggr) \II\biggl[\frac{-\theta_{k+1}+\lambda
  (1-\ell_k)}{\theta_k},\frac{\lambda \ell_k}{\theta_k}\biggr] + 1 = 0.
\end{equation}
 Replacing $\theta_0$ and $z_0$ in (\ref{ZERO}) by $\theta_k$  and
 $\theta_{k+1}$ respectively, one can prove the existence of
\[
\lim_{k\to\infty}(\ell_k,\theta_k)= (\ell_d,\theta), \quad
 \theta < \infty, \ \ell_d \le \ell \le 1,
\]
where $0<\theta <\infty$ when $\ell<1$. 

The numerical computation of $\ell_d$ is freakish in the ergodicity
region (where the determination of the $\theta_k$'s is a source of
numerical instability), but proves very satisfactory for $\rho\gg
e^{-1}$.

Next, instead of providing as in lemma \ref{LEVEL1} a stochastic
ordering for all $t$, we get a tail-ordering, which has the advantage
of achieving the \emph{exact} value $\ell=1$ for $\rho \le e^{-1}$.

\begin{lemma}\label{LEVEL2}
 \begin{equation}\label{IND} q_k(t) \ge a_k (1-
 e^{-b_kt})+o(e^{-b_kt}), \quad \forall\ k\ge 0,
\end{equation} 
where the sequence $(a_k, b_k)$ satisfies the recursive scheme
\begin{equation}\label{REC}
\begin{cases}
 \DD a_{k+1}b_{k+1}= \mu \exp\Bigl(\frac{-\lambda a_k}{b_k}\Bigr),
 \\[0.5cm] 
\DD b_{k+1}(1-a_{k+1})= \lambda (1-a_k), 
\end{cases}
 \end{equation} 
with $a_0 = \mu/(\lambda+\mu)$ and $b_0 = \lambda+\mu$. 

 Setting $\DD a\egaldef\lim_{k\to\infty}a_k$ and $\DD
 b\egaldef\lim_{k\to\infty}b_k$ in (\ref{REC}), one has the limits
 \begin{equation}\label{AB} 
\begin{cases} 
\DD a=1,\  b=\frac{\lambda}{r}, \ \mathrm{if} \quad \rho \le
 e^{-1},\\[0.2cm] a=x, \ b= \lambda, \ \mathrm{if} \quad \rho \ge
e^{-1},
\end{cases}
 \end{equation} 
where $x\le 1$ is the root of the equation
 \begin{equation}\label{ROOT2} xe^x= \frac{1}{\rho}. 
\end{equation}
\end{lemma}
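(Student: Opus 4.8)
The plan is to prove the two assertions separately: first the tail lower bound (\ref{IND}) together with the recursion (\ref{REC}) by induction on $k$, and then the limits (\ref{AB}) by passing to the limit in the already-established recursion. The base case $k=0$ is immediate, since the explicit formula $q_0(t)=\ell_0(1-e^{-\theta_0 t})$ with $\ell_0=\mu/(\lambda+\mu)$ and $\theta_0=\lambda+\mu$ is exactly of the form (\ref{IND}) with $a_0=\ell_0$, $b_0=\theta_0$ (and no $o$-term); the case $k=1$ was in effect treated in Lemma \ref{LEVEL1}. The whole point of the tail formulation is that, in the regime where $q_k$ is asymptotically a defective exponential, one step of scheme (\ref{ITE1}) maps such a profile into another one, and the leading tail coefficients obey the clean autonomous recursion (\ref{REC}) rather than the more rigid all-$t$ bound of Lemma \ref{LEVEL1}.

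For the inductive step I would first substitute the hypothesised lower bound $q_k(y)\ge a_k(1-e^{-b_k y})+o(e^{-b_k y})$ into the third equation of (\ref{ITE1}). Since $\gamma_{k+1}$ is decreasing in $\int_0^t(1-q_k)$, replacing $q_k$ by its exponential minorant gives, exactly as in the computation preceding Lemma \ref{LEVEL1}, the explicit profile $\mu\exp[-\lambda(1-a_k)t-\frac{\lambda a_k}{b_k}(1-e^{-b_k t})]$. The key observation is that this is bounded below, for all $t\ge 0$, by the pure defective exponential $Ce^{-\nu t}$ with $C=\mu\exp(-\lambda a_k/b_k)$ and $\nu=\lambda(1-a_k)$, because the factor $e^{-\frac{\lambda a_k}{b_k}(1-e^{-b_k t})}\ge e^{-\lambda a_k/b_k}$. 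Thus $\gamma_{k+1}(t)\ge Ce^{-\nu t}$ in the tail. Using the monotonicity of the map $\gamma\mapsto q$ furnished by the $\textsc{m/g/}\infty$ coupling already invoked for scheme (\ref{ITE}) — larger $\gamma$ yields larger $q$ through $s q^*(s)=\gamma^*(s)/(1+\gamma^*(s))$ — it then suffices to renewal-invert the pure exponential $Ce^{-\nu t}$. With $\gamma^*(s)=C/(s+\nu)$ one gets $q^*(s)=\frac{C}{s(s+\nu+C)}$, whence $q(t)=\frac{C}{\nu+C}\bigl(1-e^{-(\nu+C)t}\bigr)$. This gives $q_{k+1}(t)\ge a_{k+1}(1-e^{-b_{k+1}t})+o(e^{-b_{k+1}t})$ with $b_{k+1}=\nu+C$ and $a_{k+1}=C/b_{k+1}$, and a direct check shows these are precisely (\ref{REC}): $a_{k+1}b_{k+1}=C=\mu\exp(-\lambda a_k/b_k)$ and $b_{k+1}(1-a_{k+1})=b_{k+1}-C=\nu=\lambda(1-a_k)$.

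For the limits (\ref{AB}), I would pass to the limit $(a_k,b_k)\to(a,b)$ in (\ref{REC}). The second equation becomes $(1-a)(b-\lambda)=0$, forcing either $a=1$ or $b=\lambda$. If $a=1$, the first equation reads $b=\mu e^{-\lambda/b}$; writing $r=\lambda/b$ this is $re^{-r}=\rho$, i.e.\ (\ref{ROOT}), with $r\le 1$ the smaller root, giving $b=\lambda/r$. If instead $b=\lambda$, the first equation gives $a\lambda=\mu e^{-a}$, i.e.\ $ae^{a}=1/\rho$, which is (\ref{ROOT2}) with $a=x$. To complete the argument I would establish convergence of $(a_k,b_k)$ by the same monotonicity-plus-boundedness scheme indicated after Lemma \ref{LEVEL1} (bounding $b_{k+1}$ by the analogue of (\ref{ZERO}) with $\theta_0,z_0$ replaced by $b_k,b_{k+1}$), and then select the correct branch by the regime: for $\rho\le e^{-1}$ equation (\ref{ROOT}) is solvable and the $a=1$ branch is the limit (proper distribution), while for $\rho\ge e^{-1}$ the limit sits on the $b=\lambda$ branch with $a=x<1$ (defective), consistently with the transience statement $x\le\ell$ of Theorem \ref{TH2}(C); the two branches coincide at the critical value $\rho=e^{-1}$, where $r=x=1$ and $b=\lambda$.

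The main obstacle is the tail bookkeeping in the inductive step: one must verify that the dominant singularity of $q_{k+1}^*$ genuinely sits at $-b_{k+1}$, i.e.\ that the sub-leading corrections coming from the exact incomplete-gamma profile of $\gamma_{k+1}$ (which decay at the faster rate $\nu+b_k$) are indeed absorbed into the $o(e^{-b_{k+1}t})$ term, which requires checking $b_{k+1}=\nu+C\le\nu+b_k$, that is $\mu\exp(-\lambda a_k/b_k)\le b_k$, along the iteration. The second delicate point is proving convergence of $(a_k,b_k)$ and justifying the branch selection at the phase transition; once convergence is granted, the identification with (\ref{ROOT}) and (\ref{ROOT2}) is purely algebraic.
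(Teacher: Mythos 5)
Your algebra coincides with the paper's at every point where algebra is involved: the profile $\mu\exp\bigl[-\lambda(1-a_k)t-\frac{\lambda a_k}{b_k}(1-e^{-b_kt})\bigr]$, the constants $C=\mu e^{-\lambda a_k/b_k}$ and $\nu=\lambda(1-a_k)$, the renewal inversion $q^*(s)=C/[s(s+\nu+C)]$ reproducing (\ref{REC}), and the two-branch fixed-point analysis $(1-a)(b-\lambda)=0$ leading to (\ref{ROOT}) and (\ref{ROOT2}) (the paper omits the convergence of $(a_k,b_k)$ as "a simple exercise", much as you do). The gap is in the central comparison step of your induction. You replace $\gamma_{k+1}$ by the pure exponential minorant $Ce^{-\nu t}$ and assert that "larger $\gamma$ yields larger $q$", citing the \textsc{m/g/$\infty$} coupling used for scheme (\ref{ITE}). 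That coupling compares two queues whose empty-probabilities are both of the admissible form $\mu\exp\bigl\{-\lambda\int_0^t(1-q(y))dy\bigr\}$ for a genuine sub-distribution function $q$ with $q(0)=0$; the function $Ce^{-\nu t}$ is \emph{not} of this form, because $C<\mu$ forces the corresponding exponent $Q(t)=\nu t/\lambda+\log(\mu/C)/\lambda$ to jump at $t=0$, i.e.\ the measure $dQ$ carries an atom at the origin. Nor can you retreat to transforms: monotonicity of $x\mapsto x/(1+x)$ gives $sq_{k+1}^*(s)\ge s\tilde q^*(s)$ for real $s>0$, but an ordering of Laplace transforms does not imply a pointwise ordering of the underlying distribution functions -- the inversion map $\gamma\mapsto q$ is the alternating series $q'=\sum_{j\ge1}(-1)^{j+1}\gamma^{*j}$ (convolution powers) and is not monotone under pointwise domination in general.

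This is precisely the difficulty the paper's proof is built around. Lemma \ref{MONO} supplies the comparison principle (monotonicity of $Q\mapsto\Delta_Q$), but its hypotheses require $Q$-continuity, i.e.\ exclude atoms; accordingly, the paper's proof of Lemma \ref{LEVEL2} keeps a \emph{finite} scaling constant $D>b_0$, so that the minorant $\psi_1$ of (\ref{PSI1}) retains the incomplete-gamma structure and corresponds to a continuous exponent, and only then splits off the pure-exponential part at the transform level via (\ref{PSI3}), the remainder $w(s,D)/D$ being controlled through the location of its first pole. The paper even states the warning explicitly: it is not possible to take $D=\infty$, since this creates an atom at $t=0$ and Lemma \ref{MONO} then fails in general -- and your argument is exactly the $D=\infty$ shortcut. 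Your "main obstacle" paragraph identifies a secondary issue (pole bookkeeping, $b_{k+1}\le\nu+b_k$); the missing idea is the valid comparison principle itself, namely Lemma \ref{MONO} together with the finite-$D$ device, which converts a domination of \emph{continuous exponents} (not of arbitrary $\gamma$'s) into a domination of the inverted measures.
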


In the course of the proof of lemma~\ref{LEVEL2}, we will have to
characterize positive measures when then are defined from a Laplace
transform of the form $\frac{f^*(s)}{1+f^*(s)}$, as for instance in
(\ref{CONV}), which a priori does not correspond to a \emph{completely
monotone} function, according to the classical definition
of~\cite{FEL}. The following lemma does address this question and
might be of intrinsic interest.

\begin{lemma}\label{MONO}
Let $Q$ be a measure concentrated on $[0,\infty[$, and its
corresponding Laplace transform
$Q^{*}(s)=\int_{0}^{\infty}e^{-st}dQ(t)$, for any complex $s$ with
$\Re(s)\ge 0$. Define
\begin{eqnarray*}
\psi(s) &\egaldef& \int_0^\infty \mu e^{-(\lambda Q(t) +
st)}dt, \\
\omega(s)&\egaldef&\frac{\psi(s)}{1 +\psi(s)}.
\end{eqnarray*}

Then $\omega(s+\mu)$ is the Laplace transform of a positive measure
$\Delta_Q$ $[0,\infty[$. In addition $\Delta_Q$ is a decreasing
functional of $Q$, in the sense that, for all $ R\ge Q$, $R$ being
$Q$-continuous,
\[\Delta_R\le\Delta_Q.
\]
\end{lemma}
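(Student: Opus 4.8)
The plan is to recognize $\psi$ and $\omega$ as objects already met in Lemma~\ref{SYS}. Indeed $\psi(s)=\beta^{*}(s)$ with $\beta(t)=\mu e^{-\lambda Q(t)}$, which is precisely the shape~(\ref{EQU1}); consequently, by~(\ref{CONV}) (equivalently the Volterra relation~(\ref{EQU2})), $\omega(s)=\psi(s)/(1+\psi(s))$ is the Laplace--Stieltjes transform $sP^{*}(s)$ of the distribution $P$ of the \emph{first $\mu$-event} $\tau$ for the process governed by $Q$. Everything then reduces to two points: that $P$ is a genuine (possibly defective) probability distribution, so that $dP\ge 0$; and that the shift $s\mapsto s+\mu$ merely realizes an exponential killing which keeps the construction finite for \emph{every} $Q$.

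For positivity I would reproduce the regenerative argument of Lemma~\ref{SYS} for the given $Q$. Realize $e^{-\lambda Q(t)}$ as the probability that an \textsc{m/g/$\infty$} queue with arrival rate $\lambda$ and service law $G$ defined by $1-G=Q'$ is empty at time $t$; extend the descendant process $X$ past the $\mu$-events exactly as there, and let $\tau$ be the first $\mu$-event. The same term-by-term decomposition that produced~(\ref{EQU2}) gives $\beta=P'+\beta*P'$, i.e.\ $sP^{*}(s)=\omega(s)$, with $P$ manifestly nondecreasing. Then $\omega(s+\mu)=\int_0^{\infty}e^{-st}\,e^{-\mu t}\,dP(t)$, so that $\Delta_Q(dt)=e^{-\mu t}\,dP(t)$ is a positive measure on $[0,\infty[$. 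The merit of the shift is visible here: $\tilde\beta(t)\egaldef\mu e^{-\lambda Q(t)-\mu t}$ has mass $\int_0^{\infty}\tilde\beta=\psi(\mu)\le 1$, hence is a \emph{sub}probability density even when $\psi(0)=\infty$, which is what secures convergence of the construction for all $Q$.

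For the monotonicity I would argue by coupling. Expressed through the service laws, $R\ge Q$ forces $G_R\le G_Q$, so a quantile coupling $S^{R}=G_R^{-1}(U)\ge G_Q^{-1}(U)=S^{Q}$ driven by a single Poisson($\lambda$) stream of arrival epochs yields $X^{R}(t)\ge X^{Q}(t)$ for all $t$, whence the $R$-empty periods are contained in the $Q$-empty ones. Feeding both queues with one common $\mu$-clock, the first admissible ring satisfies $\tau_Q\le\tau_R$ almost surely. Since $e^{-\mu u}\ind{u\le t}$ is nonincreasing in $u$, this gives $e^{-\mu\tau_Q}\ind{\tau_Q\le t}\ge e^{-\mu\tau_R}\ind{\tau_R\le t}$ pathwise; taking expectations yields $\Delta_R([0,t])\le\Delta_Q([0,t])$ for every $t$, which is the asserted ordering $\Delta_R\le\Delta_Q$. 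The hypothesis that $R$ be $Q$-continuous is exactly what removes boundary ambiguities in this comparison.

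The routine part is the coupling; the delicate point is the representation $\omega(s)=sP^{*}(s)$ together with the identification of $e^{-\lambda Q(t)}$ as a bona fide emptiness probability, which tacitly uses that the admissible $Q$ are concave (of the form $\int_0^{\,\cdot}(1-G)$) — the $Q$-continuity and the restriction to this class being the technical price to pay. I would expect the main obstacle to be making this realization rigorous for a general measure $Q$. An alternative, purely analytic route would be to prove directly that $\omega(s+\mu)$ is completely monotone by exhibiting $\tilde\beta^{*}$ as a Stieltjes function and using that $f\mapsto f/(1+f)$ preserves the Stieltjes class, but this seems to demand complete monotonicity of $Q'$ and is less transparent than the probabilistic construction.
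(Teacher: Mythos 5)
Your positivity argument takes a genuinely different route from the paper's. You transport lemma~\ref{SYS} bodily: realize $e^{-\lambda Q(t)}$ as the emptiness probability of an \textsc{m/g/$\infty$} queue, rerun the regeneration argument to identify $\omega(s)=sP^{*}(s)$ with $P$ the law of the first $\mu$-event, and read off $\Delta_Q(dt)=e^{-\mu t}\,dP(t)\ge 0$. The paper never goes back to a queue: it views $\widetilde\psi(s)\egaldef\psi(s+\mu)$ as the transform of the subprobability density $\mu e^{-(\lambda Q(t)+\mu t)}\le \mu e^{-\mu t}$, expands $\omega(s+\mu)=\sum_{k\ge0}(-1)^{k}\widetilde\psi^{\,k+1}(s)$ as an alternating series of convolutions, and concludes by uniqueness of Laplace transforms; monotonicity then comes from differentiating this series term by term in $Q$. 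Your route is cleaner where it applies, but it requires $Q(t)=\int_{0}^{t}(1-G)$ for some service law $G$ (so $Q$ absolutely continuous and concave). That restriction is \emph{not} a reformulation of the hypothesis ``$R$ being $Q$-continuous'', which qualifies the pair $(R,Q)$ in the monotonicity statement; and the paper does invoke the lemma for functions outside your class, e.g.\ $R(t)=a_0/b_0+(1-a_0)t$, which has an atom at the origin.

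The genuine gap is in the monotonicity part. Your coupling rests on the inference that $R\ge Q$ ``forces'' $G_R\le G_Q$. It does not: since only the primitive $Q(t)$ enters $\psi$, the hypothesis $R\ge Q$ means $R(t)\ge Q(t)$ for every $t$, i.e.\ $\int_0^t(1-G_R)\ge\int_0^t(1-G_Q)$ for all $t$, an integrated form of domination that is strictly weaker than the pointwise ordering $1-G_R\ge 1-G_Q$ of the tails. When the tails cross, no quantile (indeed no almost sure monotone) coupling $S^{R}\ge S^{Q}$ of the service times exists, the inclusion of empty periods fails, and $\tau_Q\le\tau_R$ a.s.\ cannot be arranged. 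What your argument proves is the lemma under the stronger hypothesis that the \emph{densities} are ordered a.e., and that version is too weak for the paper: in the proof of lemma~\ref{LEVEL2} the comparison is needed precisely for pairs whose densities cross --- for $R(t)=(1-a_0)t+a_0/b_0$ against $Q(t)=(1-a_0)t+\frac{a_0}{b_0}(1-e^{-Dt})$ one even has $R'<Q'$ everywhere, the domination $R\ge Q$ coming entirely from the atom at $0$; the same crossing occurs for the two concave functions behind $\gamma^{*}_1$ and $\psi_1$. The paper's term-by-term differentiation survives exactly because it uses nothing beyond the pointwise inequality $R(t)\ge Q(t)$.
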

\begin{proof}
For any complex number $s$ with $\Re (s)\ge 0$,  $\widetilde{\psi}(s) \egaldef
\psi(s+\mu)$ can be viewed as the the Laplace transform of 
a positive measure $U$ having the density $\mu e^{-(\lambda Q(t)+\mu
t)}$. Thus
\begin{equation}\label{CONT}
 \PP\{U\leq t\} = \int_0^t\mu e^{-(\lambda Q(t)+\mu t)}dt \leq
 1-e^{-\mu t}\leq 1,
\end{equation}
and the following expansion holds
\begin{equation}\label{CONT1} 
  \omega(s+\mu)= \sum_{k=0}^\infty (-1)^k\widetilde{\psi}^{k+1}(s),
\end{equation}
where $\widetilde{\psi}^k(s)$ stands for the transform of the $k$-fold
convolution of $U$ defined in (\ref{CONT}). A function being uniquely
determined---up to values in a set of measure zero---by the values of
its Laplace transform in the region $\Re (s)\ge\mu$, the first part of
the lemma is proved. As for the monotony, one can differentiate the
inverse of (\ref{CONT1}) term by term (with respect to $Q$): since
each term is multiplied by $(-\lambda)^k$, the resulting series is
negative and the conclusion follows.
\end{proof}

\begin{proof}[\ifspringer\else Proof \fi of lemma~\ref{LEVEL2}]
Most of the ingredients reside in the integral representation of
$\gamma^*_1(s)$ by means of formula (\ref{eq:gamma}), and we shall
present the main lines of argument.

Fix a number $D,\,b_0<D<\infty$. Then (\ref{SCALE}) yields the inequality
\begin{equation}\label{PSI1}
\gamma^*_1(s) \ge \psi_1(s) \egaldef \frac{\mu}{D} \exp \biggl(\frac{-\lambda
a_0}{b_0}\biggr) \II\biggl[\frac{s+\lambda
  (1-a_0)}{D}, \frac{\lambda a_0}{b_0}\biggl],
\end{equation}
 whence
\[
sq_1^*(s)= \frac{\gamma_1^*(s)}{1+\gamma_1^*(s)}\ge 
\frac{\psi_1(s)}{1+\psi_1(s)}, \quad \forall s>0.
\]
Setting
\begin{equation}\label{PSI2}
u(s) = \mu\exp \biggl(\frac{-\lambda a_0}{b_0}\biggr) \frac{1}{\lambda
(1-a_0) + s},
 \end{equation}
and isolating the first term is the power series expansion of (\ref{PSI1})
by means of (\ref{eq:gamma}), we obtain after a routine algebra
\begin{equation}\label{PSI3}
\frac{\psi_1(s)}{1+\psi_1(s)} = \frac{u(s)}{u(s)+1} + 
\frac{w(s,D)}{D}.
\end{equation}
 Remarking the first pole of $w(s,D)$ is the root of $u(s)+1=0$, we
 can use lemma~\ref{MONO} and a term by term inversion of
 (\ref{PSI3}) to obtain
\[
q_1(t) \ge a_1(1-e^{-b_1 t}) + \frac{\mathcal{O}(e^{-b_1 t})}{D} ,
\]
where the couples $(a_1,b_1),(a_0,b_0)$ satisfy
system (\ref{REC}). 

At step $k=2$, one would introduce a constant, say $D_{1}$, and repeat
the same procedure to obtain (\ref{IND}). It might be useful to note
that it is not possible to take $D=\infty$, since this would create an
atom at $t=0$, in which case lemma~\ref{MONO} does not work in
general.
 
At last, it is a simple exercise (therefore omitted) to verify the
existence of $(a,b) =\lim_{k\to\infty} (a_k,b_k)$, given by
(\ref{AB}).  The proof of the lemma is terminated.
\end{proof}

Now, to conclude the proof of theorem~\ref{TH2}, it merely suffices to
note that, by (\ref{ROOT2}),
\[
\rho^{-1}- \rho^{-2}\le x \le \ell \le \rho^{-1}, \quad \forall \rho \ge
 e^{-1}.
\] 
\qed
\paragraph{Subsidiary comments} The method of schemes to analyze nonlinear
operators in a probabilistic context is extremely powerful (see
e.g.~\cite{DEL} for problems related to systems in thermodynamical
limit), and in some sense deeply related to the construction of
Lyapounov functions. Up to sharp technicalities, the schemes
(\ref{ITE}) and (\ref{ITE1}) can be exploited to derive precise
information about the speed of convergence as $t\to\infty$, for any
$\rho,\, 0<\rho<\infty$, and when pushing exact computations slightly
farther, one perceives underlying relationships with intricate
continued fractions. Finally, we note that the question of transience
could be studied from a large deviation point of view, by considering
$\varepsilon(t)$ as the member of a family indexed by the parameter
$(\rho-e^{-1})$---see in this respect section~\ref{GROWTH}.

\section{Some stationary distributions}\label{DISTRIBUTIONS}

In this section, we derive the stationary laws of some performance
measures of interest when the system is ergodic, i.e. $\rho\leq
e^{-1}$. Incidentally, note that the only process  studied
so far, that is the number $X$ of vertices attached to the root,
behaves like the number of customers in a \textsc{m/g/$\infty$} queue
with arrival rate $\lambda$ and service time distribution $p$, so that
\[
  \lim_{t\to\infty}\PP\{X(t)=k\} = e^{-r}\frac{r^k}{k!},\ \forall k\ge 0.
\]

Another point worth mentioning is that, as for the model
in~\cite{PUH1}, the Markov process $G(t)$ is reversible and hence has
an explicit invariant measure. To see this, notice that at each vertex
$v$, leaves are added at rate $\lambda$, and removed at rate
$\mu\eta(v)$, where $\eta(v)$ stands for the number of leaves attached
to $v$. Therefore, the stationary probability of some configuration
$G$ is
\[
  \pi(G)\egaldef K\frac{\rho^{N_G}}{\prod_{v\in G}\eta(v)!},
\]
where $K$ is a normalization constant. The ergodicity of the Markov
process $G(t)$ is then equivalent to the convergence of the series
$\sum\pi(G)$, where the sum is taken over all admissible trees $G$.
However, while counting Catalan trees as in~\cite{PUH1} is not that
difficult, the combinatorics is  more involved in the present setting,
and  this direction will be pursued no further.

\subsection{Volume of the tree}
Let $N\egaldef\lim_{t\to\infty}N(t)$, where $N(t)$ introduced
in section~\ref{NOTATION} stands for the volume of $G(t)$ and the
limit is taken in distribution. 

\begin{theorem}\label{thm.lawN}
When $\rho\leq e^{-1}$, the distribution of the stationary volume $N$
is given by
\begin{equation}\label{eq.N=k}
  \PP\{N=k\} = \frac{1}{r}\frac{k^{k-1}}{k!}\rho^k,
\end{equation}
where $r$ is given by (\ref{ROOT}). Moreover, the mean value of $N$
is given by
\begin{equation}\label{eq.meanN}
\EE N = \frac{1}{1-r}.
\end{equation}
\end{theorem}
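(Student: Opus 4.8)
The plan is to show that, in the stationary regime, the tree $G$ is nothing but a Galton--Watson tree whose offspring law is Poisson with mean $r$, and then to identify $N$ with its total progeny. The starting point is a functional equation for the probability generating function $F(z)\egaldef\EE z^N$. By the regenerative argument already used in the proof of lemma~\ref{SYS}, the subtrees hanging at the direct descendants of the root are \emph{independent} and \emph{identically distributed}, each with the same law as $G$ itself; moreover, as recalled at the beginning of section~\ref{DISTRIBUTIONS}, the number $X$ of vertices attached to the root is, at steady state, Poisson distributed with mean $r$. Writing $N=1+\sum_{i=1}^{X}N_i$ with the $N_i$ i.i.d.\ copies of $N$ and computing the generating function of a Poisson sum, I would obtain
\[
F(z)=z\exp\bigl[r(F(z)-1)\bigr].
\]

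Next I would read off the law of $N$ by Lagrange inversion. Setting $F=z\Phi(F)$ with $\Phi(w)=\exp[r(w-1)]$ and noting $\Phi(0)=e^{-r}\neq0$ gives
\[
\PP\{N=k\}=[z^k]F(z)=\frac1k\,[w^{k-1}]\Phi(w)^k
=\frac1k\,e^{-rk}\,[w^{k-1}]e^{rkw}
=\frac{e^{-rk}(rk)^{k-1}}{k!},
\]
which is the Borel law with parameter $r$. Since $\rho\le e^{-1}$ forces $r\le1$ by theorem~\ref{TH2}(B), the embedded branching is subcritical or critical, so $N$ is almost surely finite and $\sum_k\PP\{N=k\}=1$, consistent with ergodicity. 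It then remains to substitute $e^{-r}=\rho/r$, coming from the defining equation~(\ref{ROOT}), namely $re^{-r}=\rho$; this turns $e^{-rk}(rk)^{k-1}/k!$ into $\frac1r\frac{k^{k-1}}{k!}\rho^k$ and yields~(\ref{eq.N=k}).

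For the mean, rather than summing the series I would simply differentiate the functional equation at $z=1$. Using $F(1)=1$ one gets $F'(1)=1+rF'(1)$, hence $\EE N=F'(1)=\frac1{1-r}$, which is exactly~(\ref{eq.meanN}).

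The delicate step is the very first one, namely the rigorous justification of the fixed-point equation for $F$: one must check that, \emph{at stationarity}, every living vertex carries an independent Poisson($r$) number of children and that the pending subtrees are genuinely i.i.d.\ copies of $G$, the subtlety being that conditioning a vertex to be alive is correlated with its offspring process through the $\mu$-events. I expect this to be the main obstacle. It can be settled either by combining the \textsc{m/g/$\infty$} stationary picture of section~\ref{DELETE} with the self-similarity of subtrees, or more directly by observing that the reversible stationary measure $\pi(G)\propto\rho^{N_G}/\prod_{v}\eta(v)!$ coincides, once rewritten with $re^{-r}=\rho$ and $\sum_{v}\eta(v)=N_G-1$, with the law $\prod_{v}e^{-r}r^{\eta(v)}/\eta(v)!$ of a Poisson($r$) Galton--Watson tree; this independent identification pins down the offspring law and legitimates the equation for $F$.
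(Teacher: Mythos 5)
Your algebraic core coincides exactly with the paper's: your fixed-point equation $F(z)=z\exp[r(F(z)-1)]$ is equation~(\ref{eq.phi}) in disguise (via $\rho=re^{-r}$, it is $r\phi(z)=\rho z\exp\{r\phi(z)\}$, i.e.\ $r\phi(z)=C(\rho z)$ with $C$ the Cayley function), and Lagrange inversion plus differentiation at $z=1$ then yield (\ref{eq.N=k}) and (\ref{eq.meanN}) precisely as in the paper. The genuine gap is the one you flag yourself: the justification of that equation. What must be proved is not merely that the root has Poisson$(r)$ children at stationarity and that the subtrees are conditionally independent (both true), but that the subtree hanging at a child \emph{which is still alive at the observation epoch} is again distributed as the stationary tree. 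A surviving child has an age $x$ drawn from the equilibrium (spread) density $(1-p(x))/m$ of the \textsc{m/g/$\infty$} queue, and its subtree, given that age, is $N(x)$ conditioned on $\{\tau>x\}$; your ``i.i.d.\ copies of $N$'' claim is therefore equivalent to the renewal-reward identity $\phi(z)=\frac{1}{m}\EE\bigl[\int_0^\tau z^{N(x)}dx\bigr]$, where $\phi(z)=\lim_{t\to\infty}\EE z^{N(t)}$. This identity is exactly what the paper establishes: it derives the time-dependent equation~(\ref{eq.lawNt}), writes a renewal equation for $\EE z^{N(t)}$ analogous to (\ref{EQU2}), and passes to the limit by Laplace transforms and the Abelian argument of (\ref{NEC}); combining this with (\ref{eq.lawN}) gives (\ref{eq.phi}). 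Your fallback route (a) is this very argument in all but name, but it is not carried out, so as written the proof is incomplete at its foundational step.

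Your shortcut (b) does not work as stated. In the paper's invariant measure $\pi(G)\propto\rho^{N_G}/\prod_{v}\eta(v)!$, the quantity $\eta(v)$ is the number of \emph{leaves} attached to $v$, not its offspring number, so the identity $\sum_v\eta(v)=N_G-1$ you invoke fails for that $\eta$, and the rewriting of $\pi$ as $\prod_v e^{-r}r^{\eta(v)}/\eta(v)!$ is illegitimate. Even after reinterpreting $\eta(v)$ as the number of children, $\prod_v e^{-r}r^{\eta(v)}/\eta(v)!$ is the law of a Galton--Watson tree on \emph{plane} (ordered) trees, whereas the chain $G$ and its invariant measure live on unordered rooted trees, where the correct symmetry weight is $1/|\mathrm{Aut}(G)|$ rather than $1/\prod_v\eta(v)!$; the two differ as soon as two siblings carry isomorphic non-trivial subtrees. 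The identification can be repaired --- the Poisson$(r)$ Galton--Watson tree induces on unordered trees the measure $\rho^{N_G}/\bigl(r\,|\mathrm{Aut}(G)|\bigr)$, and Cayley's formula $\sum_{N_G=k}1/|\mathrm{Aut}(G)|=k^{k-1}/k!$ then recovers (\ref{eq.N=k}) --- but this automorphism bookkeeping, together with an honest verification of detailed balance and normalizability on that state space, is exactly the ``more involved'' combinatorics the paper explicitly declines to pursue. So either execute route (a), which is the paper's proof, or do the unordered-tree combinatorics in route (b) in full; as it stands, neither is done.
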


\begin{proof}
We proceed as in lemma~\ref{SYS}, saying that the number of vertices
in the tree at time $t$ is equal to $1$ plus the numbers of vertices
in all the descendants that have appeared in $[0,t]$ and are not yet
dead. The construction mimics the former proposed for the process
$X_v(t)$: the volume of a subtree rooted at some vertex $v$ is
distributed as $N(t)$ for $t\leq \tau_v$.

For any complex number $z$, $|z|<1$, we have therefore
\begin{eqnarray}
\EE z^{N(t)} 
   &=& z\sum_{k=0}^\infty \frac{e^{-\lambda t}(\lambda t)^k}{k!}
       \biggl\{\int_0^t \frac{dx}{t}\EE \Bigl[z^{N(x)\ind{x\leq\tau}}
                                   \Bigr]\biggr\}^k\nonumber\\[0.2cm]
   &=& z\exp\biggl\{\lambda \EE \Bigl[\int_0^t\bigl(z^{N(x)}-1\bigr)
                                             \ind{x\leq\tau}dx\Bigr]\biggr\},
                                 \label{eq.lawNt}
\end{eqnarray}
and, letting $t\to\infty$,
\begin{equation}
  \EE z^{N} 
     = z\exp\biggl\{\lambda \EE\Bigl[\int_0^\tau\bigl(z^{N(x)}-1\bigr)
                                             dx\Bigr]\biggr\}
     = \frac{\rho z}{r}\exp\biggl\{\lambda \EE\Bigl[\int_0^\tau z^{N(x)}
                                             dx\Bigr]\biggr\}.\label{eq.lawN}
\end{equation}

It is easy to write a renewal equation similar to~(\ref{EQU2}), namely
\[
 \EE z^{N(t)} = \EE[z^{N(t)}\ind{t\leq\tau}] 
              + \int_0^t \EE z^{N(t-y)}dp(y),
\]

which, after setting $\phi(z,t)\egaldef\EE z^{N(t)}$ and
$\widetilde\phi(z,t)\egaldef\EE[z^{N(t)}\ind{t\leq\tau}]$, and taking 
 Laplace transforms with respect to $t$, yields the equation
\[
  \phi^*(z,s)=\widetilde\phi^*(z,s)+\phi^*(z,s)sp^*(s).
\]
Then, as in the case of~(\ref{NEC}), using the boundedness of
$z^{N(t)}$, we get
\[
\phi(z)
  \egaldef \lim_{t\to\infty}\phi(z,t) 
  = \frac{1}{m}\int_0^\infty\widetilde\phi(z,t)dt
  = \frac{1}{m}\EE\Bigl[\int_0^\tau z^{N(x)}dx\Bigr].
\]

Then~(\ref{eq.lawN}) can be rewritten as
\begin{equation}\label{eq.phi}
  r\phi(z) = \rho z \exp\bigl\{r\phi(z)\bigr\},
\end{equation}
and hence $r\phi(z)=C(\rho z)$, where $C$ stands for the classical
Cayley tree generating function (see e.g.~\cite{SED}). Using the
well-known series expansion for $C$ (which follows from Lagrange's
inversion formula), we get~(\ref{eq.N=k}), since
\[
 \phi(z) = \frac{1}{r}\sum_{k=0}^\infty\frac{k^{k-1}}{k!}(\rho 
 z)^k.
\]

The mean~(\ref{eq.meanN}) is obtained by
differentiating~(\ref{eq.phi}) with respect to $z$ and taking $z=1$.
\end{proof}

The analysis of the asymptotics of~(\ref{eq.N=k}) with respect to $k$
confirms an interesting change of behavior when $\rho=e^{-1}$. Indeed,
for $k$ sufficiently large, Stirling's formula yields
\[
 \PP\{N=k\} 
   = \frac{1}{r}\frac{k^{k-1}}{k!}\rho^{k} 
   \approx \frac{1}{r}\frac{k^{k-1}}{\sqrt{2\pi k}\,k^k\,e^{-k}}\rho^k
   = \frac{1}{r}\frac{(\rho e)^{k}}{\sqrt{2\pi}\,k^{\frac{3}{2}}}.
\]
Moreover, a straightforward Taylor expansion of~(\ref{ROOT}) gives the
following estimate of $\EE N$, as $\rho\to e^{-1}$:
\[
 \EE N = \frac{1}{1-r}\approx\frac{1}{\sqrt{2(1-\rho e)}}.
\]

Thus, while all moments of $N$ exist for $\rho<e^{-1}$, there is
no finite mean as soon as $\rho=e^{-1}$. We note in passing that this
phenomenon appears sometimes in branching processes and can be viewed
as a phase transition inside the parameter region, as already remarked
in \cite{FAY}.

\subsection{Height of the tree}

Let $H\egaldef\lim_{t\to\infty}H(t)$, where $H(t)$ introduced in
section~\ref{NOTATION} stands for the height of $G(t)$. The
distribution of $H$ is given by the following theorem.

\begin{theorem}\label{thm.lawH}\mbox{}
\begin{enumerate}
\item For $\rho\leq e^{-1}$, the following relations hold:
\begin{eqnarray}
\PP\{H= 0\} 
  & = &  e^{-r} , \label{eq.lawH0}\\
\PP\{H> h+1\} 
  & = & 1-\exp\Bigl[-r\PP\{H> h\}\Bigr],
        \quad \forall h \ge 0 \,.\label{eq.lawH}
\end{eqnarray}

\item If $\rho<e^{-1}$, then there exists a positive constant
$\theta(r,1)$, such that
\begin{equation}\label{eq.asymptHexp}
  \PP\{H>h\} = \theta(r,1) r^{h+1} 
         + O\Bigl(\frac{r^{2h}}{1-r}\Bigr)\,
\end{equation}
where the function $\theta(r,x)$ is the locally analytic w.r.t.\ $x$
solution of the functional Schr\"oder equation
\[
\theta(r,1-e^{-rx}) = r\theta(r,x),
\]
subject to the boundary condition
\begin{equation}\label{eq:condschroder}
 \frac{\partial\theta}{\partial x}(r,0)=1.
\end{equation}
\item When
$\rho=e^{-1}$,
\begin{equation}\label{eq.asymptHexp1}
 \PP\{H>h\} = \frac{2}{h}+O\Bigl(\frac{\log h}{h^2}\Bigr)\,.
\end{equation}
\end{enumerate}
\end{theorem}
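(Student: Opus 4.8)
The plan is to reduce the entire theorem to the analysis of one scalar recursion coming from the self-similar branching structure of the stationary tree. First I would establish the recursion of part~(1). At steady state the number of children of the root is Poisson distributed with mean $r$ (this is the \textsc{m/g/$\infty$} stationary law $e^{-r}r^k/k!$ recalled at the start of Section~\ref{DISTRIBUTIONS}), and, conditionally on this number, the subtrees hanging from the children are i.i.d.\ stationary copies of $G$, since --- as observed in the proof of Lemma~\ref{SYS} --- the evolution of a subtree depends on nothing below its root as long as that root is alive. Hence $\PP\{H=0\}=\PP\{\text{root has no child}\}=e^{-r}$, which is~(\ref{eq.lawH0}). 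For the recursion, $H>h+1$ holds iff at least one child-subtree has height $>h$; conditioning on the Poisson number $K$ of children and using independence gives $\PP\{H\le h+1\}=\EE\bigl[(1-\PP\{H>h\})^{K}\bigr]=\exp\{-r\,\PP\{H>h\}\}$ via the Poisson generating function, which is exactly~(\ref{eq.lawH}). Writing $u_h\egaldef\PP\{H>h\}$ and $f(x)\egaldef 1-e^{-rx}$, everything reduces to the asymptotics of the orbit $u_{h+1}=f(u_h)$ started at $u_0=1-e^{-r}=f(1)$; ergodicity forces $u_h\to0$, and the regime is governed by $f'(0)=r$, which is $<1$ for $\rho<e^{-1}$ and $=1$ for $\rho=e^{-1}$.

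For part~(2), $0$ is an attracting \emph{hyperbolic} fixed point of $f$, and I would invoke Koenigs' linearization theorem: there is a unique function $\theta(r,\cdot)$, analytic near $0$ with $\theta(r,0)=0$ and $\partial_x\theta(r,0)=1$, solving the Schr\"oder equation $\theta(r,f(x))=r\,\theta(r,x)$, i.e.\ $\theta(r,1-e^{-rx})=r\,\theta(r,x)$ with the normalization~(\ref{eq:condschroder}). Iterating gives $u_h=\theta^{-1}\bigl(r^{h}\theta(r,u_0)\bigr)$, and since $u_0=f(1)$ one has $\theta(r,u_0)=r\,\theta(r,1)$; expanding the local inverse as $\theta^{-1}(y)=y+O(y^2)$ then yields $u_h=\theta(r,1)\,r^{h+1}+O(r^{2h}/(1-r))$, the factor $1/(1-r)$ being the geometric accumulation of the quadratic correction along the orbit. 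This is~(\ref{eq.asymptHexp}).

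For part~(3) the fixed point becomes \emph{parabolic}: at $\rho=e^{-1}$ one has $r=1$ and $f(x)=1-e^{-x}=x-\tfrac{x^2}{2}+\tfrac{x^3}{6}-\cdots$, so $f'(0)=1$ and geometric decay is lost. The standard remedy is to pass to $v_h\egaldef 1/u_h$: from $1/f(x)=1/x+\tfrac12+\tfrac{x}{12}+\cdots$ one obtains $v_{h+1}-v_h=\tfrac12+O(u_h)$, whence $v_h\sim h/2$ and the leading term $u_h\sim 2/h$. Pushing the expansion one order, $v_h=\tfrac{h}{2}+\tfrac16\log h+O(1)$, and inverting produces the announced estimate~(\ref{eq.asymptHexp1}).

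I expect the main obstacle to be the rigorous justification of the stationary branching decomposition underlying part~(1) --- namely that, conditionally on the number of children, the child-subtrees are genuinely \emph{independent} stationary copies of $G$. Once that product structure is secured the recursion is immediate, and the two asymptotic regimes are then routine (if delicate) applications of Koenigs' theorem in the hyperbolic case and of the classical $1/(ch)$ parabolic-iteration estimate in the critical case, the only real care being the uniform control of the error terms.
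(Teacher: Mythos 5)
Your reduction is the same as the paper's: both proofs boil the theorem down to the scalar recursion $u_{h+1}=1-e^{-ru_h}$ and then treat the hyperbolic ($r<1$) and parabolic ($r=1$) regimes separately. Parts (2) and (3) of your argument are essentially the paper's own. For $r<1$ the paper does not invoke Koenigs' theorem by name, but proves its conclusion by hand: via the infinite product (\ref{PROD}) it shows $r^{-h}d_h$ decreases to a positive limit $\theta(r,x)$ satisfying (\ref{SCH}) and (\ref{eq:condschroder}), with the explicit telescoped bound $0\le r^{-h}d_h-\theta(r,x)\le \frac{rx^2}{2}\frac{r^h}{1-r}$. That elementary telescoping is what actually delivers the uniform $O(r^{2h}/(1-r))$ error and the value $\theta(r,1)$; if you cite Koenigs you still must extend $\theta$ from a neighborhood of $0$ to $x=1$ via the functional equation and justify that the quadratic correction accumulates geometrically, which is the same computation. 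Your parabolic case matches the paper's line by line ($1/f(x)=1/x+\tfrac12+O(x)$, then $\sum_k u_k=O(\log h)$).

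The genuine gap is in part (1), and it is precisely the point you flag and leave open. The assertion that, at stationarity, the subtrees hanging from the Poisson($r$) living children are i.i.d.\ \emph{stationary} copies of $G$ does not follow from the independence observation of lemma~\ref{SYS}. The living children have equilibrium-biased ages, with density $(1-p(x))/m$ as for customers in service in an \textsc{m/g/$\infty$} queue, and the subtree of a child of age $x$ has the law of $G(x)$ conditioned on $\{\tau>x\}$ --- not the stationary law. So the per-child exceedance probability is a priori $\frac1m\int_0^\infty\PP\{H(x)>h,\,x\le\tau\}\,dx$, and what must be proved is that this age-mixture equals the stationary tail $\PP\{H>h\}$; this is a regenerative-process identity, not an independence statement, and without it your recursion is written in terms of the wrong quantity. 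The paper supplies exactly this step: it first derives the finite-$t$ identity $\PP\{H(t)\le h+1\}=\exp\bigl\{-\lambda\int_0^t\PP\{H(x)>h,\,x\le\tau\}dx\bigr\}$ by Poisson conditioning with uniform birth times (as in (\ref{PX0})), then writes the renewal equation $\PP\{H(t)>h\}=\PP\{H(t)>h,\,t\le\tau\}+\int_0^t\PP\{H(t-y)>h\}dp(y)$ and passes to the limit $t\to\infty$ by the Laplace-transform/Abelian argument of theorem~\ref{thm.lawN} (cf.\ (\ref{NEC})), which yields $\lim_{t\to\infty}\PP\{H(t)>h\}=\frac1m\int_0^\infty\PP\{H(x)>h,\,x\le\tau\}dx$ and hence the exponent $r\,\PP\{H>h\}$ in (\ref{eq.lawH}). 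That renewal/Abelian identification is the real content of part (1), rather than a technicality to be secured afterwards; once it is in place, the rest of your proposal goes through.
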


\begin{proof}
As in the previous proof, one writes the height of the tree at time
$t$ is less than $h+1$ if, and only if, all the descendants that have
appeared in $[0,t]$ are either dead or have a height smaller than $h$:
\begin{eqnarray*}
\PP\bigl\{H(t)\leq h+1\bigr\} 
   &=& \sum_{k=0}^\infty \frac{e^{-\lambda t}(\lambda t)^k}{k!}
       \Bigl\{\int_0^t \frac{dx}{t}
              \bigl[1- \PP\bigl\{H(x)>h, x\leq\tau\bigr\}\bigr]\Bigr\}^k\\
   &=& \exp\Bigl\{-\lambda\int_0^t\PP\bigl\{H(x)>h,x\leq\tau\bigr\}dx
                                                                   \Bigr\}.
\end{eqnarray*}

Letting $t\to\infty$ and arguing as in theorem~\ref{thm.lawN}, we can
write
\[
\PP\bigl\{H\leq h+1\bigr\} 
   = \exp\Bigl[-r \PP\bigl\{H>h\bigr\}\Bigr],
\]
which proves~(\ref{eq.lawH}). On the other hand, equation
(\ref{eq.lawH0}) is immediate, since it is in fact a plain rewriting
of (\ref{PX0}).

To prove the remainder of the theorem, let $d_0\egaldef x$, where $x$ is a
positive real number, and consider the sequence
\begin{equation}\label{RECdh}
d_{h+1}= 1 - e^{-rd_h}, \ h=0,1,...
\end{equation}

When $x=1$, note that we have exactly $d_{h+1}=\PP\bigl\{H>h\bigr\}$.
The question that faces us now is to compute and to estimate the
iterates of an analytic function, in the circumstances $1-e^{-rx}$.
This subject concerns a wide branch of mathematics (including
functional equations, automorphic functions, boundary value problems),
and it has received considerable attention since the nineteen
twenties. We shall employ classical arguments without further comment,
referring the interested reader to e.g.~\cite{KUC} and
\cite{deB} for a more extensive treatment.

A Taylor expansion up to second order in (\ref{RECdh}) gives
\begin{equation}\label{TAYdh}
rd_h \ge d_{h+1} = 1 - e^{-r d_h} \ge r d_h -\frac{r^2d_h^2}{2},
\end{equation}
which implies that $r^{-h} d_h$ is a decreasing sequence with
$\DD\lim_{h\to\infty} \downarrow d_h=0$ (that we already knew!)  and
\begin{equation}\label{GEOM}
d_h\le xr^{h}. 
\end{equation}

As $h\to\infty$, the asymptotic behavior of $d_{h}$ has a twofold
nature, depending on whether $r=1$ or $r<1$.

\paragraph{Case $r=1$.}
This is the easy part. Writing
\[
\frac{1}{d_{h+1}}= \dfrac{1}{1 - e^{-d_h}}= \frac{1}{d_h}+\frac{1}{2}
+O(d_h),
\]
we get immediately $d_h= O\Bigl(\frac{1}{h}\Bigr)$, and hence
\[
\frac{1}{d_h}= \frac{h}{2} +O\Bigl(\sum_{k=0}^{h-1}d_k\Bigr) 
             = \frac{h}{2} +O(\log h), 
\]
which leads  to (\ref{eq.asymptHexp1}).

\paragraph{Case $r<1$.}  
The analysis is less direct. From (\ref{TAYdh}) and (\ref{GEOM}), we
infer that, when $h\to\infty$, $r^{-h}d_h$ has a limit denoted by
$\theta(r,x)$, with
\[
0\le r^{-h}d_h - \theta(r,x) \le\frac{rx^2}{2}\frac{r^h}{1-r}.
\]

First let us show that $\theta(r,x)$ is strictly positive.  Indeed,
\begin{equation}\label{PROD}
\frac{d_{h+1}}{r^{h+1}} = x\prod_{m=0}^h \bigl(1 -\varphi_{m}(r,x)\bigr),  
\end{equation}
where, $\forall m\ge 0$, the quantity $\varphi_{m}(r,x) = O(r^{m+1})$
is an analytic function of the pair of real variables $(r,x)$ in the
region $[0,1[\times[0,A]$, with $0\le A<\infty$. Hence, as
$h\to\infty$, the infinite product in (\ref{PROD}) converges uniformly
to a strictly positive value, $\forall x>0$, so that $\theta(r,x)$ is
also  analytic of $(r,x)$ in the aforementioned region. To summarize,
\[
\lim_{h\to\infty} r^{-h} d_h  \egaldef \theta(r,x) > 0.
\]
The pleasant fact is that $\theta$, taken as a function of $x$,
satisfies the so-called Schr\"oder equation
\begin{equation}\label{SCH}
\theta(r,1-e^{-rx}) = r\theta(r,x).
\end{equation}

While it is clear that $\theta(r,0)=0$, (\ref{SCH}) does not impose
any constraint on $\DD\frac{\partial\theta}{\partial x}(r,0)$.
However, it is easy to show by induction that $d_h(x)$ (where the dependency
on $x$ is for a while explicitly written) satisfies
\[
  \DD\frac{\partial d_h}{\partial x}_{|x=0}= r^h, \ \forall h>0,
\]
and thus condition~(\ref{eq:condschroder}) also holds for $\theta$. To
conclude the proof of~(\ref{eq.asymptHexp}), it suffices to choose
$x=1$.
\end{proof}

\paragraph{Remark} We have taken the variable $x$ on the positive real 
half-line to get sharper bounds, e.g.~(\ref{GEOM}). Actually, arguing
as above, it is immediate to check that $\theta$ has an analytic
continuation in the complex $x$-plane in a a neighborhood of the
origin. In this respect, without going into a full discussion, we
mention the relationships with automorphic functions and boundary
value problems, which would allow integral representations. For our
purpose, simply writing
\[
  \theta(r,x) =\sum_{i\ge 0} \theta_{i}x^i, \quad \theta_0=0, \  \theta_1= 1 ,
\]  
we see that all the $\theta_i$'s can be computed recursively. Furthermore
the iteration of (\ref{SCH}) yields
\[
  \theta(r,d_{h}) = r^h \theta(r,x).
\]
from which we obtain 
\[
  d_{h}= \omega \bigl(r,r^h\theta(r,x)\bigr),
\]
where $\omega (r,x)$ denotes the inverse function of $\theta$ with
respect to the variable~$x$ and satisfies  the functional relation
\begin{equation}\label{INV}
1-\exp \{-r\omega(r,y)\} = \omega(r,ry).
\end{equation}
We have 
\[
\omega(r,y) =\sum_{{i\ge 0}} \omega_{i}y^i , \quad \omega_0=0, \ \omega_1=1,
\]
and again the $\omega_{i}$'s are obtained recursively. 

\section{Scaling and limit laws in the transient case}\label{GROWTH}

In this section, we present some limit laws for
$N(t)$ and $H(t)$, which are especially of interest when the system is
transient.  Beforehand, for every integer $k$ and all $t>0$, we define
the quantities
\[
\begin{cases}
\DD X_k(t) \ \egaldef \ \#\bigl\{v \in G(t): h(v) = k \bigr\},\\[0.3cm]
\DD Y_k(t) \ \egaldef \ \sum_{j=k}^{\infty} X_j(t)\ind{t\leq\tau}.
\end{cases}
\]
So, $X_k(t)$ stands for the number of vertices at  level~$k$ in the
whole tree at time~$t$.

\subsection{Scaling for $N(t)$ in the pure birth case $\mu=0$} 
\begin{lemma}\label{lem.meanXn}
When $\mu=0$, $\EE X_n(t)$ has the explicit form
\begin{equation}\label{Xn}
\EE X_n(t)=\frac{(\lambda t)^n}{n!}.
\end{equation}
\end{lemma}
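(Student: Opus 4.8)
The plan is to derive a simple first-order differential recursion for the means $\EE X_n(t)$ and solve it by induction on $n$. Since $\mu=0$, no vertex is ever deleted, so the only transitions affecting the level-$n$ population are births: each vertex currently at level $n-1$ appends a new child at level $n$ at rate $\lambda$, independently of everything else below it. A standard infinitesimal-generator (first-step) analysis then gives, for all $n\ge 1$,
\[
  \EE X_n(t+dt) = \EE X_n(t) + \lambda\,\EE X_{n-1}(t)\,dt + o(dt),
\]
that is,
\[
  \frac{d}{dt}\EE X_n(t) = \lambda\,\EE X_{n-1}(t), \qquad n\ge 1,
\]
together with the boundary value $\EE X_0(t)=1$ (the root is always present and is alone at level $0$) and the initial conditions $X_n(0)=0$ for $n\ge 1$.

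First I would treat the base case: integrating the $n=1$ equation with $\EE X_0\equiv 1$ and $X_1(0)=0$ yields $\EE X_1(t)=\lambda t$, in agreement with~(\ref{Xn}). Then, assuming inductively that $\EE X_{n-1}(t)=(\lambda t)^{n-1}/(n-1)!$, the recursion becomes
\[
  \frac{d}{dt}\EE X_n(t) = \frac{\lambda^n t^{n-1}}{(n-1)!},
\]
and integrating from $0$ with $X_n(0)=0$ produces $\EE X_n(t)=(\lambda t)^n/n!$, which closes the induction.

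The one point requiring care---and the step I expect to be the main obstacle---is justifying that the generator equation may be written and differentiated under the expectation, which hinges on the finiteness of each $\EE X_n(t)$ and on non-explosion of the process. This I would settle by observing that the total volume $N(t)=\DD\sum_{n\ge 0}X_n(t)$ is exactly a Yule (linear pure-birth) process in which each of the $N(t)$ vertices gives birth at rate $\lambda$; its mean satisfies $\frac{d}{dt}\EE N(t)=\lambda\,\EE N(t)$ with $\EE N(0)=1$, hence $\EE N(t)=e^{\lambda t}<\infty$ for every finite $t$. This both rules out explosion of the mean and dominates each $\EE X_n(t)$, legitimizing the term-by-term treatment above. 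As a consistency check, summing~(\ref{Xn}) gives $\DD\sum_{n\ge 0}(\lambda t)^n/n! = e^{\lambda t}=\EE N(t)$, as it must.
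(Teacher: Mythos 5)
Your proof is correct and follows essentially the same route as the paper: the infinitesimal-rate computation $\PP\{X_n(t+dt)=X_n(t)+1\mid X_n(t),X_{n-1}(t)\}=\lambda X_{n-1}(t)\,dt+o(dt)$ leading to $\frac{d}{dt}\EE X_n(t)=\lambda\,\EE X_{n-1}(t)$ with $\EE X_0(t)=1$, solved by induction. Your additional justification of non-explosion via the Yule-process bound $\EE N(t)=e^{\lambda t}$ is a welcome extra point of rigor that the paper leaves implicit, but it does not change the argument.
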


\begin{proof} Since
\[\PP\bigl\{X_n(t+dt)=X_n(t)+1\big|X_n(t),X_{n-1}(t)\bigr\}
   = \lambda X_{n-1}(t)dt+o(dt),\]
we obtain 
\[
\begin{cases}
\dfrac{d}{dt}\EE X_n(t)=\lambda \EE X_{n-1}(t), \quad n\geq 1, \\[0.3cm]
 \EE X_0(t)=1, 
\end{cases}
\]
and the result is immediate by induction.
\end{proof}

\begin{theorem}\label{thm.birthN}
When $\mu=0$, the expected volume at time $t$ is given by
\begin{equation}\label{eq.birthN.mean}
 \EE N(t) = e^{\lambda t},
\end{equation}
and 
\begin{equation}\label{eq.birthN.law}
\lim_{t\to\infty}\frac{N(t)}{\EE N(t)} = \mathrm{Exp}(1) \,,
\end{equation}
where the limit is taken in distribution and $\mathrm{Exp}(1)$ denotes
an exponentially distributed variable with parameter $1$.
\end{theorem}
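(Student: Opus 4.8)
The plan is to treat the two assertions separately, since the mean is immediate while the scaling limit requires an explicit law for $N(t)$. First I would establish the expected volume $(\ref{eq.birthN.mean})$. Writing $N(t)=\sum_{n\ge 0}X_n(t)$ and using linearity of expectation together with lemma~\ref{lem.meanXn}, one gets
\[
\EE N(t)=\sum_{n=0}^\infty \EE X_n(t)=\sum_{n=0}^\infty\frac{(\lambda t)^n}{n!}=e^{\lambda t}.
\]

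For the scaling limit $(\ref{eq.birthN.law})$, the key is to secure the full distribution of $N(t)$ in closed form. Setting $\phi(z,t)\egaldef\EE z^{N(t)}$ and specializing the functional equation $(\ref{eq.lawNt})$ to $\mu=0$, in which case $\tau=\infty$ almost surely so that $\ind{x\le\tau}\equiv 1$, one obtains
\[
\phi(z,t)=z\exp\Bigl\{\lambda\int_0^t\bigl(\phi(z,x)-1\bigr)dx\Bigr\}.
\]
Differentiating in $t$ turns this into the autonomous Riccati equation $\DD\partial_t\phi=\lambda\,\phi(\phi-1)$, with initial condition $\phi(z,0)=z$, which integrates by partial fractions to
\[
\phi(z,t)=\frac{z\,e^{-\lambda t}}{1-z\bigl(1-e^{-\lambda t}\bigr)}.
\]
This is precisely the probability generating function of a geometric law on $\{1,2,\dots\}$ with parameter $p_t\egaldef e^{-\lambda t}$; in particular $\EE N(t)=1/p_t=e^{\lambda t}$, which independently recovers $(\ref{eq.birthN.mean})$.

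Finally I would pass to the scaled variable $N(t)/\EE N(t)=p_t N(t)$. Since $N(t)$ is geometric, its tail is $\PP\{N(t)>k\}=(1-p_t)^{k}$ for integer $k\ge 0$, whence for every $x>0$
\[
\PP\{p_t N(t)>x\}=(1-p_t)^{\lfloor x/p_t\rfloor}.
\]
As $t\to\infty$ one has $p_t\to 0$ and $p_t\lfloor x/p_t\rfloor\to x$, so the right-hand side converges to $e^{-x}$, the tail of $\mathrm{Exp}(1)$; this yields $(\ref{eq.birthN.law})$. Equivalently, substituting $z=\exp(-s\,p_t)$ into $\phi$ and checking that the resulting Laplace transform tends to $(1+s)^{-1}$ gives the same conclusion via the continuity theorem.

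All computations here are elementary. The only point requiring a little care is the passage from the integral equation $(\ref{eq.lawNt})$ to the ODE: one must justify that $\phi(z,t)$ is differentiable in $t$ for $|z|\le 1$ and that the specialization $\ind{x\le\tau}\equiv 1$ at $\mu=0$ is legitimate. Once the geometric form of $N(t)$ is in hand, the convergence to $\mathrm{Exp}(1)$ is the classical fact that a geometric law with vanishing parameter, rescaled by that parameter, is asymptotically exponential, so no genuine obstacle remains.
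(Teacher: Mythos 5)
Your proposal is correct and follows essentially the same route as the paper: the mean via lemma~\ref{lem.meanXn}, then the functional equation~(\ref{eq.lawNt}) with $\tau=\infty$, the Riccati ODE $\partial_t\phi=\lambda\phi(\phi-1)$, and the same explicit formula for $\phi(z,t)$ (your expression is the paper's after multiplying through by $ze^{-\lambda t}$). The only cosmetic difference is the last step, where you identify $N(t)$ as exactly geometric and compute its tail directly, while the paper evaluates the Laplace transform of $e^{-\lambda t}N(t)$ and invokes Feller's continuity theorem --- a variant you yourself note is equivalent.
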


\begin{proof}
Equation~(\ref{eq.birthN.mean}) is a mere consequence of
lemma~\ref{lem.meanXn}, since
\[
  \EE N(t) = \sum_{n=0}^{\infty}\EE X_n(t) = e^{\lambda t}.
\]

Let now $\phi(z,t)\egaldef\EE z^{N(t)}$, for $z$ complex with
$|z|<1$. We start from equation~(\ref{eq.lawNt}), in which we take
$\tau=\infty$. Then the following relation holds:
\[
 \phi(z,t) =  z\exp\biggl\{\lambda \Bigl[\int_0^t\bigl(\phi(z,x)-1\bigr)
                                             dx\Bigr]\biggr\}.
\]
Differentiating with respect to $t$ yields
\[
 \frac{\partial}{\partial t}\phi(z,t) 
     = \lambda\bigl[\phi(z,t)-1\bigr]\phi(z,t),
\]
whence 
\[
  \frac{1-\phi(z,t)}{\phi(z,t)} = K e^{\lambda t},
\]
where $K$ does not depend on $t$. Since $\phi(z,0)=z$, we deduce $K =
z^{-1}-1$, and finally
\[
 \phi(z,t) = \frac{1}{\DD 1+[z^{-1}-1]e^{\lambda t}}.
\]
The Laplace transform of $e^{-\lambda t}N(t)$ is, for $\Re (s)\ge 0$,
\begin{eqnarray*}
\EE \exp\bigl\{-s e^{-\lambda t}N(t)\bigr\}
  &=& \phi\Bigl(\exp\bigl\{-se^{-\lambda t}\bigr\},t\Bigr)\\
  &=& \frac{1}{\DD 1 + 
               \bigl[\exp\{se^{-\lambda t}\}-1\bigr]e^{\lambda t}},
\end{eqnarray*}
so that, letting $t\to\infty$, 
\[
 \lim_{t\to\infty} \EE \exp\bigl\{-s e^{-\lambda t}N(t)\bigr\}
   = \frac{1}{1+s}.
\]
Now (\ref{eq.birthN.law}) follows directly from Feller's continuity
theorem (see~\cite{FEL}).
\end{proof}

\subsection{An ergodic theorem for $H(t)$}
The key result of this section concerns the height of the tree
and is formulated in the next theorem.

Let 
\begin{equation}\label{BSC} 
b(s,c)\egaldef \frac{s}{c} + \log
 \left[\frac{\lambda(1-sp^*(s))}{s}\right], \quad \Re(s)\ge 0.
\end{equation}  
\begin{theorem}\label{thm.transientH}
With probability $1$, 
\[
 \lim_{t\to\infty} \frac{H(t)}{t} = \delta,
\]
where $\delta \ge 0$ is uniquely defined from  the system of equations
\[
 b(s,\delta)=\frac{\partial b(s,\delta)}{\partial s}=0.
\]
 In the ergodic case, $\delta=0$.
\end{theorem}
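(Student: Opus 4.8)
The plan is to recast the height as a last-passage counting problem and to read the linear speed off a saddle-point analysis of the mean level occupation. Viewed level by level, $G$ is a general branching (Crump--Mode--Jagers) process: a generic vertex lives a time $\tau$ with law $p$ and, while alive, emits next-level vertices at the epochs of a Poisson process of rate $\lambda$; hence its mean offspring measure has density $\phi(t)\egaldef\lambda(1-p(t))$ and Laplace transform $\phi^*(s)=\lambda(1-sp^*(s))/s$, which is exactly the bracket in (\ref{BSC}). Writing $a_n(t)\egaldef\EE X_n(t)$ and exploiting the self-similarity already used in lemma~\ref{SYS} (each child roots an independent copy of the whole process), I obtain a convolution recursion $a_n=\phi*a_{n-1}$ between successive generic levels which, after accounting for the immortal root, gives $a_n^*(s)=\phi^*(s)^n/s$. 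As a check, $\mu=0$ forces $p\equiv0$, $\phi^*(s)=\lambda/s$, and inversion returns $\EE X_n(t)=(\lambda t)^n/n!$ of lemma~\ref{lem.meanXn}.

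Next I would track $a_n(t)$ along the ray $n=\lfloor ct\rfloor$ through the inversion formula (\ref{LAP}):
\[ \EE X_{\lfloor ct\rfloor}(t)=\frac{1}{2i\pi}\int_{\sigma-i\infty}^{\sigma+i\infty}\frac{1}{s}\exp\bigl[t\,c\,b(s,c)\bigr]\,ds, \]
since $s+c\log\phi^*(s)=c\,b(s,c)$. As $\phi^*$ is the transform of a positive measure, $\log\phi^*$ is convex, so $g(s)\egaldef c\,b(s,c)$ is convex on $(0,\infty)$ and, in the transient regime, tends to $+\infty$ at both ends (at $0$ because $\int(1-p)=+\infty$, at $\infty$ because of the linear term). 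It thus has a unique interior minimiser $s^*(c)$ characterised by $\partial_s b(s^*,c)=0$, and Laplace's method yields the rate $\frac1t\log\EE X_{\lfloor ct\rfloor}(t)\to\Lambda(c)\egaldef c\,b(s^*(c),c)=c\min_s b(s,c)$. Level $ct$ is, in mean, flooded when $\Lambda(c)>0$ and depleted when $\Lambda(c)<0$; the critical speed $\delta$ is the value of $c$ at which $\min_s g=0$. Vanishing value and vanishing derivative at one and the same point are precisely the system $b(s,\delta)=\partial_s b(s,\delta)=0$, which convexity shows to have a unique nonnegative root.

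The easy half of the almost-sure statement is the upper bound. Let $Z_k(t)$ count level-$k$ births in $[0,t]$, a nondecreasing quantity with $\{\sup_{u\le t}H(u)\ge k\}=\{Z_k(t)\ge1\}$ and $\frac{d}{dt}\EE Z_k(t)=\lambda\,\EE X_{k-1}(t)$, so that $\EE Z_k$ carries the same exponential rate $\Lambda$. Markov's inequality gives $\PP\{\sup_{u\le t}H(u)\ge k\}\le\EE Z_k(t)$; choosing integer $t=n$ and $k=\lceil cn\rceil$ with $c>\delta$ makes the right-hand side summable, and Borel--Cantelli together with the monotonicity of $n\mapsto\sup_{u\le n}H(u)$ yields $\limsup_t H(t)/t\le c$ a.s., whence $\le\delta$ after letting $c\downarrow\delta$.

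The substantive part is the matching lower bound $\liminf_t H(t)/t\ge\delta$, where the first moment is powerless and a second-moment control is needed. I would bound $\EE X_k(t)^2$ by summing over ordered pairs of level-$k$ vertices according to the level $j$ of their most recent common ancestor; self-similarity expresses each term through $a_j$ and two factors $a_{k-j}$, and for $c<\delta$ (where $a_{\lfloor ct\rfloor}(t)\to\infty$) one expects $\EE X_k(t)^2=O\bigl((\EE X_k(t))^2\bigr)$, so that Paley--Zygmund keeps $\PP\{X_{\lceil ct\rceil}(t)\ge1\}$ bounded away from $0$. Since the immortal root launches infinitely many independent subtrees, such favourable events recur, and a Borel--Cantelli argument promotes this to $H(t)\ge ct$ along a diverging sequence of times; upgrading it to \emph{all} large $t$ needs the extra observation that a deep branch, once present, persists for a time of order $t$ while fresh deep branches keep appearing. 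I expect this common-ancestor second-moment estimate, together with the persistence argument, to be the principal obstacle. Finally, in the ergodic case $\phi^*(0)=\lambda m=r\le1$, so $g(0^+)=c\log r\le0$ for every $c>0$ forces $\min_s g<0$ and hence $\delta=0$, in accordance with the tree returning to the bare root infinitely often.
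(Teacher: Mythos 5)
Your identification of $\delta$ and your upper bound are sound and essentially coincide with the paper's own route: your formula $a_n^*(s)=\phi^*(s)^n/s$ is exactly the paper's lemma~\ref{lem.lapl}, and reading the critical speed off a saddle-point analysis of the inversion integral along the ray $n=\lfloor ct\rfloor$ is precisely how the paper arrives at the system $b(s,\delta)=\partial_s b(s,\delta)=0$. Your use of the monotone birth count $Z_k(t)$ together with Markov and Borel--Cantelli is a clean variant of the paper's criterion \emph{(ii)} of lemma~\ref{ZERO-ONE} (the paper instead controls $\sup_{t\in J_n(k)}H(t)$ through a Poisson bound on the rate at which the height can move between sampling epochs).

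The genuine gap is the lower bound, which you yourself flag as ``the principal obstacle'': both the second-moment estimate $\EE X_k(t)^2=O\bigl((\EE X_k(t))^2\bigr)$ and the ``persistence'' upgrade from a diverging sequence of times to all large $t$ are left as hopes, and neither is routine. In branching-type processes the second moment along a ray is dominated by pairs of particles with a late common ancestor and can genuinely exceed the square of the mean as $c$ approaches the critical direction, so Paley--Zygmund cannot be invoked as stated; and since leaves die at rate $\mu$, a deep branch is eroded over time, so persistence also requires a quantitative argument. The paper closes this half with two ideas absent from your proposal, neither of which needs second moments. First, a zero-one law (lemma~\ref{LEMMA-AB}): by conditioning on $G(t_0)$ and comparing with the subtree regrown from the root after $t_0$, the events $A_c=\{\liminf_{t\to\infty} H(t)/t\ge c\}$ and $B_c=\{\limsup_{t\to\infty} H(t)/t\le c\}$ are shown to have probability $0$ or $1$; hence it suffices to give $A_c$ \emph{positive} probability. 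Second, an embedded Galton--Watson process (lemma~\ref{ZERO-ONE}\emph{(i)}): generation $k$ is a set of vertices at distance at least $kn$ from the root, each vertex of generation $k$ contributing offspring distributed as $Y_n(n/c)$, born within its own lifetime during the interval $[kn/c,(k+1)n/c]$. The \emph{first-moment} condition $\EE[Y_n(n/c)]>1$ --- which is what your saddle-point computation yields for $c<\delta$, applied to the transforms $\wt\varphi_j$ (with the extra factor $1-sp^*(s)$ accounting for the restriction $\ind{t\le\tau}$) rather than to $\varphi_j$ --- makes this process supercritical, so it survives with positive probability, and on survival $H(kn/c)\ge kn$ for every $k$. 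The zero-one law then upgrades positive probability to probability one, disposing of both the recurrence and the persistence issues at a stroke. This replacement of a second-moment argument by supercriticality-plus-zero-one-law is the missing core of the proof; without it your argument does not close.
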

The proof is constructed around the three forthcoming lemmas.

\begin{lemma}\label{LEMMA-AB}
Define the events
\[
A_c=\Bigl\{\liminf_{t\to\infty}\frac{H(t)}{t}\ge c\Bigr\}, \qquad
B_c=\Bigl\{\limsup_{t\to\infty}\frac{H(t)}{t}\le c\Bigr\}.
\]
Then $\PP\{A_c\}=0$ or $1$ and $\PP\{B_c\}=0$ or $1$. In other
words, $A_c$ and $B_c$ satisfy a zero-one law and can only be trivial
events (i.e. sure or impossible).
\end{lemma}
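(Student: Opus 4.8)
The plan is to exploit the self-similar branching structure already set up in the proof of Lemma~\ref{SYS}: the subtree rooted at any child of the root evolves as an independent copy of $G$, observed from the birth of that child, and is insensitive to everything outside it as long as that child is alive. I would use this to realize, \emph{inside} a single realization of $G$, infinitely many independent copies of the normalized height, and then close the argument with a Borel--Cantelli (equivalently, an essential-supremum) step. First dispose of the ergodic case $\rho\le e^{-1}$, which is immediate: there $G$ is positive recurrent, $H(t)$ is tight, and $H(t)/t\to0$ almost surely, so both $\liminf$ and $\limsup$ of $H(t)/t$ equal the constant $0$ and $A_c,B_c$ are trivial. So assume $\rho>e^{-1}$, i.e.\ $\ell<1$.

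Let the root $v_0$ produce its children at the epochs $S_1<S_2<\cdots$ of a Poisson process of rate $\lambda$, the $n$-th child founding an independent subtree with height process $H^{(n)}(\cdot)$ measured from its own birth. A child is \emph{immortal} ($\tau=\infty$) with probability $1-\ell>0$, independently across children, so almost surely infinitely many children $n_1<n_2<\cdots$ are immortal, and the associated $H^{(n_j)}$ are i.i.d.\ copies of the height process of an infinite tree. The crucial pathwise facts are that for each immortal child $n$ one has $H(t)\ge 1+H^{(n)}(t-S_n)$ for all $t\ge S_n$, and that, $S_n$ being a finite constant, a time shift by $S_n$ is invisible after division by $t$; hence, writing $L_*=\liminf_{t}H(t)/t$, $L^*=\limsup_{t}H(t)/t$, and $L_*^{(n)},L^{*(n)}$ for the corresponding subtree quantities, we get $L_*\ge L_*^{(n)}$ and $L^*\ge L^{*(n)}$ for every immortal $n$, with $\{L_*^{(n_j)}\}_j$ i.i.d.\ and $\{L^{*(n_j)}\}_j$ i.i.d.

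Let $\beta_*$ denote the essential supremum of the common law of $L_*^{(n_1)}$. For any $c<\beta_*$ we have $\PP\{L_*^{(n_1)}\ge c\}>0$, so by the divergent half of Borel--Cantelli infinitely many immortal subtrees satisfy $L_*^{(n_j)}\ge c$, whence $L_*\ge c$ almost surely; letting $c\uparrow\beta_*$ along a countable sequence gives $L_*\ge\beta_*$ a.s. The reverse bound $L_*\le\beta_*$ a.s.\ will come from a stochastic comparison (see below), after which $L_*=\beta_*$ is an almost sure constant and therefore $\PP\{A_c\}=\PP\{L_*\ge c\}\in\{0,1\}$. The identical reasoning applied to $L^*$ and to the i.i.d.\ family $\{L^{*(n_j)}\}_j$ yields $\PP\{B_c\}\in\{0,1\}$.

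The main obstacle is precisely the upper bound $L_*\le\beta_*$ (and its analogue for $L^*$). The delicate point is that the subtree founded by an \emph{immortal} child is a copy of $G$ only after conditioning on immortality, which biases the founder's childless waiting times (they become $\mathrm{Exp}(\lambda+\mu)$ instead of $\mathrm{Exp}(\lambda)$), whereas the genuine root of $G$ never dies; a priori the two growth-rate laws could differ. I would settle this by a coupling, built on the $\mu$-event extension introduced in Lemma~\ref{SYS}, showing level by level that the true (never-dying-root) tree is dominated in height by an immortal-conditioned subtree, so that the former grows no faster than the latter. This gives $\beta_*\ge\mathrm{ess\,sup}\,L_*$, hence $L_*\le\beta_*$ a.s., and the same domination handles $L^*$. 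Everything else---the pathwise inequalities, the independence of the immortal subtrees, and the Borel--Cantelli step---is routine.
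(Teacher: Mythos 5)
Your lower-bound half is solid: the root's children found i.i.d.\ subtrees, immortality is an event of probability $1-\ell>0$ attached independently to each of them, and the pathwise inequality $H(t)\ge 1+H^{(n)}(t-S_n)$ together with the essential-supremum argument for i.i.d.\ sequences correctly gives $\liminf_t H(t)/t\ge\beta_*$ almost surely. But the proof has a genuine gap exactly where you flag it, and the flagged step is not a routine verification. To get the reverse inequality you must compare the law of the whole tree (whose root carries no death clock at all) with the law of a child's subtree conditioned on immortality. That conditioned law is a Doob $h$-transform whose transition rates involve the non-explicit survival function $h$ (the probability that a given finite configuration never loses its founder); your ``level by level'' domination would require (a) monotonicity of $h$ with respect to inclusion of configurations, itself needing a coupling argument, (b) a vertex-by-vertex comparison of the $h$-transformed rates with the original ones, and (c) the construction of a Markov coupling preserving inclusion of trees. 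None of this is carried out, and without it you have only one of the two inequalities, so the zero-one law does not follow. Note also that the naive alternative---bounding $\limsup_t \max_n H^{(n)}(t-S_n)/t$ by $\sup_n \limsup_t H^{(n)}(t-S_n)/t$---is invalid, since limsup and supremum over infinitely many subtrees do not commute; so the coupling really is forced in your scheme. A second, smaller flaw: in the ergodic case you assert that tightness of $H(t)$ gives $H(t)/t\to0$ almost surely; tightness gives only convergence in probability, and controlling $\limsup_t H(t)/t$ requires an excursion argument (e.g.\ finiteness of the mean of the per-excursion maximal height), which is not obvious, in particular at the critical point $\rho=e^{-1}$ where the stationary height has infinite mean.

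It is instructive to see how the paper sidesteps your obstacle entirely: it never identifies the almost sure constant and never conditions on survival. Fixing times $t_0<t_1<\dots<t_k$, it compares $G$ with the tree $G'$ regrown from the root after time $t_0$; since $G'$ is a fresh copy independent of $G(t_0)$ and $H_G(t)\ge H_{G'}(t)$, one gets $\PP\{A_c\mid G(t_0)=G_0\}\ge\PP\{A_c\}$ for every $G_0$, and the tower property $\EE[\PP\{A_c\mid G(t_0)\}]=\PP\{A_c\}$ then forces equality; the reversed inclusion handles $B_c$. Thus the martingale $\PP\{A_c\mid G(t_0),\dots,G(t_k)\}$ is constant, and L\'evy's zero-one law identifies that constant with the indicator of $A_c$, so it is $0$ or $1$. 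This uses only the one-sided monotone comparison you already exploit for your lower bound, needs no ergodic/transient dichotomy, no conditioned law, no coupling, and no identification of the limiting constant (which is instead the object of theorem~\ref{thm.transientH}). If you want to salvage your route, the cleanest fix is to abandon the immortality conditioning and prove constancy of $\liminf$ and $\limsup$ by the paper's martingale argument, keeping your branching construction for what it is genuinely good at: producing the positive lower bound used later in lemma~\ref{ZERO-ONE}.
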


\begin{proof}
Fixing an arbitrary $t_0$, with $G(t_0)=G_0$, we want to show that
$A_c$ does not depend on $G_0$. For this purpose, consider the random
process $G'(t)\subset G(t)$ constructed as follows: for $t\le t_0$, it
consists only of the root, and for $t>t_0$ it contains exactly that
part of $G$ grown from the root after time $t_0$. Then the probability
that $A_c$ holds for $G'$ is clearly equal to the probability that
$A_c$ holds for $G$ without conditioning. In other words, since
$H_G(t)\ge H_{G'}(t)$, we have
\[
 \PP\{A_c\mid G(t_0)=G_0\} \ge \PP\{A_c\}.
\] 
Then basic properties of the conditional expectation yield
\[
\EE [\PP\{A_c\mid G(t_0)\}] = \PP\{A_c\},
\]
so that
\begin{equation}\label{EventA}
\PP\{A_c\mid G(t_0)=G_0\} = \PP\{A_c\}
\end{equation}
for any $G_0$. On conditioning with respect to $G(t_0), G(t_1),\ldots,
G(t_k)$, for any arbitrary increasing sequence of times $t_k$, we see
(\ref{EventA}) still holds. Hence, the assertion for $A_c$ is a direct
consequence of the zero-one law for martingales (see~e.g.~\cite{KAL}).

Quite similarly, if the event $B_c$ holds for $G$, then it is also in
force for any subtree rooted at a vertex of $G_0$, which reads
\[
\PP\{B_c\mid G(t_0)=G_0\} \le \PP\{B_c\}.
\]
The lemma is proved.
\end{proof}

\begin{lemma}\label{ZERO-ONE} \mbox{ }
\begin{enumerate} 
\item[(i)] If, for some integer $n$ and real number $c>0$,
$\EE[Y_n(n/c)]>1$, then
\[
  \PP\{A_{c}\}=1.
\]
\item[(ii)] If, for some $n$ and real number $c>0$, 
$\DD \sum_{k=0}^\infty \EE[X_{kn}(kn/c)]<\infty$,
then
\[
  \PP\{B_{c}\}=1.
\]
\end{enumerate}
\end{lemma}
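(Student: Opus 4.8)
The plan is to prove the two parts by dual probabilistic techniques: part (i) by embedding a supercritical branching process and then invoking the zero--one law of Lemma~\ref{LEMMA-AB}, and part (ii) by a first--moment (Borel--Cantelli) estimate resting on the fact that the height is completely detected by the occupation of a single level.

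For (i), I would embed a Galton--Watson process observed on the time grid $t_k=kn/c$. The crucial structural input is the i.i.d.\ subtree property established in the proof of Lemma~\ref{SYS}: the subtree rooted at a vertex $v$ evolves independently of everything below $v$ for as long as $v$ is alive, and the indicator $\ind{t\le\tau}$ truncates each such subtree at its first $\mu$-event, so that the copies one follows are genuinely independent and identically distributed. With the root as sole ancestor, I would declare the offspring of a focal vertex $v$ (born at time $\sigma_v$) to be its living descendants reaching relative depth $\ge n$ within one window of length $n/c$; re-rooting each offspring and iterating produces a branching process whose per-particle mean is $\EE[Y_n(n/c)]>1$, hence supercritical. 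A supercritical branching process survives with strictly positive probability, and on the survival event there is, for every $k$, a surviving particle sitting at absolute level $\ge kn$ and alive throughout a window of length $n/c$ whose right endpoint is at most $(k+1)n/c$. Because consecutive windows overlap and their left endpoints tend to infinity, every large $t$ is covered, so that $H(t)\ge kn$ while $t\le(k+1)n/c$, whence $\liminf_{t\to\infty}H(t)/t\ge\lim_k kc/(k+1)=c$. This gives $\PP\{A_c\}>0$, and Lemma~\ref{LEMMA-AB} upgrades it to $\PP\{A_c\}=1$.

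For (ii), I would first record the exact identity $\{H(s)\ge L\}=\{X_L(s)\ge 1\}$, valid at every level $L$: any vertex at level $>L$ has an ancestor at level exactly $L$, and that ancestor is necessarily alive, since a vertex carrying a descendant is not a leaf and so cannot be deleted. Thus $\PP\{H(kn/c)\ge kn\}=\PP\{X_{kn}(kn/c)\ge1\}\le\EE[X_{kn}(kn/c)]$ by Markov's inequality, and the hypothesis makes $\sum_k\PP\{X_{kn}(kn/c)\ge1\}$ finite; Borel--Cantelli then yields, almost surely, $H(kn/c)<kn$ for all large $k$. To promote this grid statement to all $t$ and exclude a transient excursion of the height above the line of slope $c$ inside a cell $[kn/c,(k+1)n/c]$, I would bound the probability that the front ever reaches level $(k+1)n$ before time $(k+1)n/c$ by the expected number of vertices ever attaining that level, which is controlled by an analogous first--moment estimate and remains summable; a second Borel--Cantelli then removes these excursions eventually. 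Combining the two, almost surely $H(t)<c't$ for all large $t$ and every $c'>c$, i.e.\ $\limsup_{t\to\infty}H(t)/t\le c$, which is $\PP\{B_c\}=1$.

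The main obstacle in (i) is making the embedded process rigorously i.i.d.\ while keeping its mean equal to $\EE[Y_n(n/c)]$: one must reconcile the continuous, overlapping reproduction windows of distinct particles with a clean generation recursion by attributing each deep descendant unambiguously to a single ancestor, using the $\mu$-event truncation to restore independence; once this is done, the transfer of branching survival to the continuous-time $\liminf$ is routine. In (ii) the delicate point is precisely the grid-to-continuum passage, where the pointwise control secured at the times $kn/c$ must be reinforced by a supremum bound over each cell so that short-lived excursions of $H(t)$ cannot spoil the $\limsup$.
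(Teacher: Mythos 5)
Your part (i) is correct and is essentially the paper's own proof: embed a Galton--Watson process whose offspring law is that of $Y_n(n/c)$ (the truncation $\ind{t\le\tau}$ restoring independence of the subtree copies), use supercriticality to get a positive survival probability, hence $\PP\{A_c\}>0$, and upgrade to $1$ via Lemma~\ref{LEMMA-AB}; your observation that the relevant particle stays alive throughout its window, so that the $\liminf$ over continuous time is not spoiled between grid points, is exactly the right one.

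Part (ii), however, has a genuine gap at the grid-to-continuum step. You propose to bound the probability that the height ever reaches level $(k+1)n$ before time $(k+1)n/c$ by the expected number of vertices \emph{ever} attaining that level, and you assert this is summable ``by an analogous first-moment estimate.'' But the hypothesis of the lemma controls only $\EE[X_{kn}(kn/c)]$, i.e.\ vertices \emph{alive} at level $kn$ at the single time $kn/c$, whereas the ``ever attained'' count equals
\[
\lambda\int_0^{(k+1)n/c}\EE X_{(k+1)n-1}(y)\,dy ,
\]
and a vertex that climbs to level $(k+1)n$ inside the cell and then dies before the endpoint contributes to this quantity while being invisible to the hypothesis. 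The two cannot be compared up to constants in general: in the ergodic regime (where the lemma must also apply, since theorem~\ref{thm.transientH} invokes it to obtain $\delta=0$) one has $p(t)\to1$, every vertex eventually dies, and no uniform bound of ``ever occupied'' by ``occupied at the terminal time'' exists. Consequently your second Borel--Cantelli argument has no summable majorant supplied by the stated hypothesis. (In the transient case one could salvage the step, since each vertex reaching the level survives forever with probability at least $1-\ell>0$, giving a bound by $\EE[X_{(k+1)n}((k+1)n/c)]/(1-\ell)$; but that uses the classification, which is external to the lemma.)

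The paper closes this hole by a mechanism based on deletions rather than births, and this is the idea your proof is missing. The height can decrease by one only when the \emph{last} vertex of the current top level dies, so height decreases form a point process of intensity at most $\mu$; hence, on the cell $[kn/c,(k+1)n/c]$, the supremum of $H$ exceeds the value at the \emph{right} endpoint by at most a Poisson variable $\pi(\mu n/c)$ whose mean does not depend on $k$. Allowing a slack $a_k=O(\sqrt{k})=o(k)$, the Poisson tail probabilities are summable, and the event $\sup_{t\in[kn/c,(k+1)n/c]}H(t)<(k+1)n+a_k$ for all large $k$ still forces $\limsup_{t\to\infty}H(t)/t\le c$. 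Your grid estimate itself (the identity $\{H(s)\ge L\}=\{X_L(s)\ge1\}$, Markov's inequality, Borel--Cantelli) coincides with the paper's; only the excursion control needs to be replaced by an argument of this kind.
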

\begin{proof} \mbox{}\\
For the sake of brevity, let $J_n(k)$ denote the time interval 
$[kn/c, (k+1)n/c]$.
 
\emph{(i)} Consider a standard branching process ${\xi_k,k\ge0}$,
endowed with an offspring distribution equal to that of $Y_n(n/c)$.
From the condition in 4.3(i), this process has a probability of non
extinction which is strictly positive and will be denoted by $y(n,c)$.
The key point is that ${\xi_k}$ can be viewed as defining a subtree
$G'\subset G$ such that $H_{G'}(kn/c)\ge kn$. Then $\PP\{A_{c}\}\ge
y(n,c)>0$, and we have $\PP\{A_{c}\}=1$ by lemma~\ref{LEMMA-AB}.

Indeed, to build such a subtree $G'$, we associate with each generation of
$\xi_k$ a set of vertices $s_k\subset G(k\tau)$, such that
$\xi_k=|s_k|$. 

Let $s_0=\{v_0\}$ and, for each $v\in s_k,k\ge0$, let $G_v$ be a
subtree rooted at $v$ and born during $J_n(k)$ (by convention $G_v$ is
empty if $v$ dies). We put
\[
s_{k+1}=\bigcup_{v\in s_k} \{v'\in G_v: d(v',v)\ge n\}.
\]
This construction produces the desired tree, since the volume of each
set belonging to the above union is exactly distributed as $Y_n(n/c)$,
and because $s_k$ consists of vertices located at a distance at least
$kn$ from the root.

\emph{(ii)} 
Let $a_k=o(k)$ be a sequence of non-decreasing positive integers. Then
for any fixed integer $k_0$, we have the inequality
\[
\PP(B_c) \ge 
\PP\Bigl\{\sup_{t\in J_n(k)} H(t)<(k+1)n+a_k, \forall k \ge k_0 \Bigr\}.
\]
We observe the height of the tree  decreases at a rate not faster than 
$\mu$, so that, given the event $\{H((k+1)n/c)<(k+1)n\}$, the supremum of 
$H(.)$  on the interval $J_n(k)$ is bounded by 
\[
 \sup_{t\in J_n(k)} H(t) \le (k+1)n + \pi(\mu n/c),
\]
where $\pi(x)$ denotes a Poisson random variable with rate $x$. Thus
we have
\begin{equation}\label{POS}
\PP(B_c) \ge \PP\{H(kn/c) < kn, \forall k \ge k_0 \} 
\prod_{k\ge k_0} \PP\{\pi(\mu n/c) < a_k \},
\end{equation}
and we will show that the right-hand side of (\ref{POS}) can be
 rendered positive. First, we remark that
\begin{eqnarray*}
\PP\{H(kn/c)<kn, \forall k \ge k_0 \} 
&\ge & 1-\sum_{k\ge k_0} \PP\{H(kn/c)\ge kn\} \\[0.2cm] &\ge &
1-\sum_{k\ge k_0} \EE[X_{kn}(kn/c)]\to 1, \ \mathrm{as}\
{k_0\to\infty}.
\end{eqnarray*}
Secondly, we choose the sequence 
\[a_k=j, \ \forall k\in[j(j-1)/2 +1,\,
j(j+1)/2], \forall j\ge 1,
\]
which consists of blocks of repeated integers satisfying  
$a_k=\mathcal{O}\bigl(k^{1/2}\bigr)$.

Setting $\nu\egaldef\mu n/c$, the product in
(\ref{POS}) will be positive, provided that the following sum is finite
\begin{eqnarray*}
\sum_{k\ge k_0} \PP\{\pi(\nu)\ge a_k \} &\le &
\sum_{k\ge k_0} e^{-\nu} \frac{\nu ^{a_k}}{a_k!} 
\Big(1-\frac{\nu}{a_k}\Big)^{-1} \\
&\le &\sum_{j\ge \sqrt{k_0}}  e^{-\nu}
\frac{\nu^j}{(j-1)!}\Big(1-\frac{\nu}{j}\Big)^{-1} \le \nu
\Big(1-\frac{\nu}{\sqrt{k_0}}\Big)^{-1} < \infty ,
\end{eqnarray*}
and hence \emph{(ii)} follows from the zero-one property of $B_c$.

The proof of the lemma is concluded.
\end{proof}

\begin{lemma}\label{lem.lapl}
For any $\Re(s)>0$, let $\varphi_k(s)$ and $\wt\varphi_k(s)$ be the
Laplace transforms
\begin{eqnarray*}
\varphi_k(s) 
  &\egaldef& \int_0^\infty \EE X_k(t)\,e^{-st}dt,\\
\wt\varphi_k(s) 
  &\egaldef& \int_0^\infty \EE \bigl[X_k(t)\ind{t\leq\tau}\bigr]\,e^{-st}dt.
\end{eqnarray*}
Then
\begin{eqnarray*}
\varphi_k(s) &=&  \frac{\lambda^k (1-sp^*(s))^k}{s^{k+1}} , \\
\wt\varphi_k(s) &=& \frac{\lambda^k (1-sp^*(s))^{k+1}}{s^{k+1}}.
\end{eqnarray*}
\end{lemma}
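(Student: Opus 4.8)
The plan is to set up two linear identities — one recursing on the level index $k$, the other relating the constrained and unconstrained expectations at a fixed level — and then to iterate them after passing to Laplace transforms.

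First I would exploit the self-similar branching structure already used in Lemma~\ref{SYS} and Theorem~\ref{thm.lawN}. In the extended process the root $v_0$ is immortal and emits children according to a Poisson process of rate $\lambda$ for all $t\ge0$. Each child $c$, born at some time $x$, is itself the root of a subtree whose internal level counts are distributed exactly as those of the whole tree and remain valid while $c$ is alive, that is, up to its first $\mu$-event $\tau_c\sim p$. Since a vertex at level $k\ge1$ of the whole tree is precisely a vertex at level $k-1$ of the subtree of a living child, averaging over the Poisson arrivals gives the level recursion
\[
 \EE X_k(t) = \lambda\int_0^t \EE\bigl[X_{k-1}(t-x)\ind{t-x\le\tau}\bigr]\,dx, \qquad k\ge1.
\]
Because children are produced at the constant rate $\lambda$, the right-hand side is the convolution of $X_{k-1}(\cdot)\ind{\cdot\le\tau}$ with the constant kernel $\lambda$, whose Laplace transform amounts to multiplication by $\lambda/s$; hence $\varphi_k(s)=\tfrac{\lambda}{s}\,\wt\varphi_{k-1}(s)$.

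Next I would relate $\wt\varphi_k$ to $\varphi_k$ through the regeneration at $\tau$. The key observation is that a $\mu$-event of the root can occur only while $X_{v_0}(t)=0$, so at the first such event $\tau$ the entire subtree below the root is empty and the process restarts from age $0$; consequently $X_k(\tau+t)\stackrel{d}{=}X_k(t)$ for every level $k$, not merely for the direct descendants. Splitting on whether $\tau\ge t$ or $\tau=y<t$ then yields the renewal equation
\[
 \EE X_k(t) = \EE\bigl[X_k(t)\ind{t\le\tau}\bigr] + \int_0^t \EE X_k(t-y)\,dp(y),
\]
exactly as for the volume in Theorem~\ref{thm.lawN}. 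Using $p(0)=0$, so that the transform of $dp$ is $sp^*(s)$, this reads $\varphi_k(s)=\wt\varphi_k(s)+sp^*(s)\varphi_k(s)$, i.e.\ $\wt\varphi_k(s)=\bigl(1-sp^*(s)\bigr)\varphi_k(s)$.

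It then remains to combine the two relations. Together they give $\varphi_k(s)=\tfrac{\lambda(1-sp^*(s))}{s}\,\varphi_{k-1}(s)$, and since the root is the unique level-$0$ vertex, $X_0\equiv1$ forces $\varphi_0(s)=1/s$; a trivial induction produces $\varphi_k(s)=\lambda^k(1-sp^*(s))^k/s^{k+1}$, whence $\wt\varphi_k(s)=\lambda^k(1-sp^*(s))^{k+1}/s^{k+1}$. The one point deserving care — and the main obstacle — is the justification of the regeneration used in the second identity: one must argue that at its first $\mu$-event the root genuinely carries no descendants, so that the restart is exact for the whole level profile and not only for $X_{v_0}$. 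The remaining analytic steps, namely the interchange of expectation and Laplace integration and the convolution theorem, are routine given the boundedness of the integrands.
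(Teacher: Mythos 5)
Your proof is correct and takes essentially the same route as the paper's: the same level recursion $\EE X_k(t)=\lambda\int_0^t\EE\bigl[X_{k-1}(y)\ind{y\leq\tau}\bigr]dy$ from the root decomposition, the same regeneration identity split on $\{t\leq\tau\}$ versus $\{\tau<t\}$, and the same Laplace-transform induction from $\varphi_0(s)=1/s$. Your explicit justification that the restart at $\tau$ is exact for the whole level profile (since a $\mu$-event can only occur when the root has no living descendants, at which point the tree is reduced to the root) is precisely the point the paper leaves implicit in calling the second relation ``a straight regeneration relation.''
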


\begin{proof}
It is not difficult to check the following relations
\begin{eqnarray*}
\EE X_k(t) 
  &=& \int_0^t \EE \bigl[X_{k-1}(y)\ind{y\leq\tau}\bigr] \lambda dy, 
      \quad k\ge 1 \\
\EE X_k(t) 
  &=& \EE \bigl[X_k(t)\ind{t\leq\tau}\bigr] + \int_0^t 
  \EE X_k(t-y) dp(y), 
\end{eqnarray*}
with the initial condition $\EE X_0(t)=1$. Actually, the first
equation follows from an argument already employed before. Namely, the
number of vertices at level $k$ are the direct descendants of vertices
at level $(k-1)$ still alive at time $t$, remarking that each such
descendant on $[0,t]$ appears independently at rate $\lambda$. The
second equation is a straight regeneration relation. Therefore,
\begin{eqnarray*}
\varphi_k(s) &=& \frac{\lambda\wt\varphi_{k-1}(s)}{s} \quad k\ge 1, \\[0.2cm]
\varphi_k(s) &=& \wt\varphi_k(s) + \varphi_k(s)sp^*(s),
\end{eqnarray*}
whence, since $\varphi_0(s)=1/s$,
\[
\varphi_k(s) = \frac{\lambda \bigl(1-sp^*(s)\bigr)\varphi_{k-1}(s)}{s}=
 \frac{\lambda^k\bigl(1-sp^*(s)\bigr)^k}{s^{k+1}} , 
\]
and the result follows.
\end{proof}

We are now in a position to prove theorem~\ref{thm.transientH}.
\begin{proof}[\ifspringer\else Proof \fi of theorem~\ref{thm.transientH}]
The proof is split into two parts, each one corresponding
respectively to criteria \emph{(i)} and \emph{(ii)} of
lemma~\ref{ZERO-ONE}.
 
  First we shall find the \emph{largest} $c$, denoted by
  $c_{\text{inf}}$, ensuring criterion \emph{(i)} of
  lemma \ref{ZERO-ONE} is fulfilled. Applying the results of
  lemma~\ref{lem.lapl} and the inversion formula~(\ref{LAP}), we have
\begin{eqnarray}
\EE[Y_n(n/c)]
  &=& \sum_{j=n}^\infty \frac{1}{2i\pi}\int_{\sigma-i\infty}^{\sigma+i\infty}
\wt\varphi_j(s)e^{sn/c}ds \nonumber \\[0.2cm] 
&=&\frac{1}{2i\pi}\int_{\sigma-i\infty}^{\sigma+i\infty}
\left[\frac{\lambda(1-sp^*(s))}{s}e^{s/c}\right]^n
\frac{1-sp^*(s)}{s-\lambda(1-sp^*(s))}ds,\nonumber \\[0.2cm]
 &=& \frac{1}{2i\pi}\int_{\sigma-i\infty}^{\sigma+i\infty}
\frac{e^{nb(s,c)}\,[1-sp^*(s)]} {s-\lambda(1-sp^*(s))}ds, 
\qquad\label{INTYn} 
\end{eqnarray} 
in the region $\DD\U\egaldef \bigl\{\sigma>0,\ \sigma>\lambda
(1-\sigma p^*(\sigma))\bigr\}$, where $b(s,c)$ has been defined in
(\ref{BSC}). When the system is ergodic, it is immediate to check the
region $\U$ coincides with the complex half-plane $\Re (s)>0$. On the
other hand, in the transient case, the equation
\[
s=  \lambda(1-sp^*(s))
\] 
has exactly one root, which is real and belongs to the open interval
$]0,\lambda[$. Computing the residues of the integral (\ref{INTYn})
(by shifting the line of integration to the left, after analytic
continuation of $p^*(s)$ to the region $\sigma =-\epsilon$, for some
$\epsilon>0$) is a tedious task, in particular due to the pole of order
$n$ at $s=0$. We will rather proceed by a kind of \emph{saddle-point}
approach (see e.g.~\cite{FUC}).
 
The form of the integrand in (\ref{INTYn}) shows that, as
$n\to\infty$, the boundedness of $\EE[Y_n(n/c)]$, depends primarily on
the value of the modulus of $b(s,c)$. In fact one can see precisely
that $\EE[Y_n(n/c)]$, for each fixed $c$, does not tend to zero iff
the minimum of $b(s,c)$ is non-negative at any possible real
saddle-point $s\in\U$, where
\[
\frac{\partial b(s,c)}{\partial s}=0, \quad s \in\U.
\]
It follows that $c_{\text{inf}}$ is the unique real solution of the
system
\begin{equation}\label{CC}
b(s,c_{\text{inf}})= \frac{\partial b(s,c_{\text{inf}})}{\partial
s}=0, \quad s\in\U.
\end{equation}
Without presenting a detailed discussion, we shall simply stress that
in the real plane $(s,y)$ the curves
\[
y=s/c \quad \text{and}\quad y= -\log
\left[\frac{\lambda(1-sp^*(s))}{s}\right]
\]
 are tangent (resp.\ intersecting, non-intersecting) for
$c=c_{\text{inf}}$ (resp. $c>c_{\text{inf}}$,
$c<c_{\text{inf}}$).

As for the second part of the theorem, the question is to find the
value $c_{\text{sup}}$, equal to the smallest positive number $c$
satisfying criterion \emph{(ii)} of lemma~\ref{ZERO-ONE}, which
implies the finiteness of the quantity
\begin{eqnarray}
\sum_{k=0}^\infty \EE[X_{kn}(kn/c)] &=& \sum_{k=0}^\infty
\frac{1}{2i\pi}\int_{\sigma-i\infty}^{\sigma+i\infty}
\varphi_{kn}(s)e^{skn/c}ds \nonumber \\[0.2cm]
 &=&
\frac{1}{2i\pi}\int_{\sigma-i\infty}^{\sigma+i\infty}
\frac{ds}{s}\left[1-\left(\frac{\lambda(1-sp^*(s))}{s}e^{s/c}\right)^n
\right]^{-1} \nonumber \\[0.2cm] 
 &=&
\frac{1}{2i\pi}\int_{\sigma-i\infty}^{\sigma+i\infty}
\frac{ds}{s}\bigl[1-e^{nb(s,c)}\bigr]^{-1}, \qquad\label{INTXn} 
\end{eqnarray}
where (\ref{INTXn}) holds in the region $\V\egaldef
\bigl\{\sigma>0,\ \sigma> \lambda(1-\sigma p^*(\sigma)) e^{\sigma/c}\bigr\}$.
 
Clearly, the existence of the last integral in (\ref{INTXn}), as
 $n\to\infty$, amounts again to find the sign of $\Re (b(s,c))$, for
 $s\in\V$. Arguing exactly as above, one can find at once the equality
\[
c_{\text{sup}}=c_{\text{inf}}\egaldef\delta.
\]
When the system is ergodic, $\lim_{s\to 0} b(s,c)= \log \lambda m =
 \log r \le 0$, which yields $\delta=0$ as might be expected.

The proof of the theorem is concluded. \end{proof} As a by-product, we
 state the following corollary, of which the almost sure convergence
 part has been derived in~\cite{DEV,PIT} through different and less
 terse methods.

\begin{corollary}\label{MU0}
 In the pure birth case $\mu=0$, almost surely and in $L_1$, 
 \begin{equation}\label{eq:MU0}
 \lim_{t\to\infty} \frac{H(t)}{t} = \lambda e . 
 \end{equation}
 \end{corollary}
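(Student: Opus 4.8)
The plan is to get the almost sure statement by a direct specialization of theorem~\ref{thm.transientH}, and then to upgrade it to $L_1$ convergence through a uniform integrability argument resting on lemma~\ref{lem.meanXn}.

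First I would observe that when $\mu=0$ no vertex is ever deleted, so every lifetime is infinite, $p(t)\equiv 0$, and hence $p^*(s)=0$ and $1-sp^*(s)=1$. Substituting this into (\ref{BSC}) reduces $b$ to $b(s,c)=\frac{s}{c}+\log\frac{\lambda}{s}$. The defining system of theorem~\ref{thm.transientH} then becomes explicit: $\frac{\partial b}{\partial s}=\frac1c-\frac1s=0$ forces $s=c$, and feeding this into $b(s,c)=0$ gives $1+\log\lambda-\log c=0$, that is $c=\lambda e$. Thus $\delta=\lambda e$, and the almost sure half of (\ref{eq:MU0}) follows at once.

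For the $L_1$ half I would first note that, in the deletion-free dynamics, a vertex at level $\ge n$ has all of its ancestors still alive, hence it has an ancestor at level exactly $n$; consequently $\{H(t)\ge n\}=\{X_n(t)\ge 1\}$. Markov's inequality together with lemma~\ref{lem.meanXn} then yields the uniform tail bound
\[
\PP\{H(t)\ge n\}\le \EE X_n(t)=\frac{(\lambda t)^n}{n!}\le\Bigl(\frac{\lambda e\, t}{n}\Bigr)^{n}.
\]
I would use this to show that $\{H(t)/t:t\ge 1\}$ is bounded in $L_2$. Writing $\EE[H(t)^2]=\sum_{n\ge1}(2n-1)\PP\{H(t)\ge n\}$ and splitting the sum at $n_0=\lceil 2\lambda e\,t\rceil$, the low range contributes $O(n_0^2)=O(t^2)$ (bounding each probability by $1$), while for $n>n_0$ one has $(\lambda e\,t/n)^n<2^{-n}$, so the tail $\sum_{n>n_0}(2n-1)2^{-n}$ is bounded uniformly in $t$. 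Dividing by $t^2$ gives $\sup_{t\ge1}\EE[(H(t)/t)^2]<\infty$.

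Boundedness in $L_2$ makes the family $\{H(t)/t\}$ uniformly integrable, and combined with the almost sure convergence $H(t)/t\to\lambda e$ just established, the Vitali convergence theorem delivers the $L_1$ convergence, completing the proof. The only genuinely delicate point is this uniform integrability step; the computation of $\delta$ is a routine substitution, and the identity $\{H(t)\ge n\}=\{X_n(t)\ge 1\}$, specific to the case $\mu=0$, must be invoked explicitly since it is what turns lemma~\ref{lem.meanXn} into a usable tail estimate.
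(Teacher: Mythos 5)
Your proof is correct, and it differs from the paper's in a genuine way, mainly in the $L_1$ step. For the almost sure part, the paper does not invoke theorem~\ref{thm.transientH} itself: it instantiates the explicit formula of lemma~\ref{lem.meanXn} directly into criteria \emph{(i)} and \emph{(ii)} of lemma~\ref{ZERO-ONE}, Stirling's formula showing the criteria hold for $c<\lambda e$ and $c>\lambda e$ respectively. Your specialization of the theorem --- $p(t)\equiv0$, hence $p^*(s)=0$, $b(s,c)=s/c+\log(\lambda/s)$, and the system $b=\partial b/\partial s=0$ giving $\delta=\lambda e$ --- is a legitimate shortcut: the same computation packaged one level higher, and indeed what the theorem (stated for all $\mu\ge0$) was designed to deliver. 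The real divergence is the $L_1$ part: the paper exploits superadditivity of $\EE H(t)$ (valid only for $\mu=0$), gets existence of $\lim_{t\to\infty}\EE H(t)/t$ from a variant of Kingman's theorem, and then establishes the linear bound $\EE H(t)\le\lambda e t+O(\sqrt{t})$ from the same tail estimate $\PP\{H(t)\ge k\}\le\EE X_k(t)$ that you use, which pins the limit of the means at $\lambda e$ and yields $L_1$ convergence. You instead turn that tail estimate into $L_2$-boundedness of $\{H(t)/t: t\ge 1\}$, hence uniform integrability, and conclude by Vitali; this is more elementary (no Kingman, no superadditivity) and self-contained, while the paper's route buys the explicit estimate $\EE H(t)\le\lambda et+O(\sqrt{t})$ and the convergence of the means as standalone facts. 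One nitpick: the identity $\{H(t)\ge n\}=\{X_n(t)\ge1\}$ is not special to $\mu=0$ --- only leaves ever die, so the tree stays connected and a vertex at level exactly $n$ exists whenever the height is at least $n$, for any $\mu$; the paper uses the same inequality without restriction. What is special to $\mu=0$ is the closed form $\EE X_n(t)=(\lambda t)^n/n!$ of lemma~\ref{lem.meanXn}, which is what your tail bound (and the paper's) really rests on.
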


\begin{proof}
Instantiating equation (\ref{Xn}) in criteria \emph{(i)} and
 \emph{(ii)} of lemma~\ref{ZERO-ONE} yields directly the first part of
 (\ref{eq:MU0}). On the other hand, it is immediate to check that the
 function $\EE H(t)$ is superadditive (this would be not true for
 $\mu\ne0$), namely
\[
\EE H(s+t) \geq \EE H(s) + \EE H(t),
\]
so that, by a variant of Kingman's theorem (see~\cite{KAL}), the limit
 $\DD\lim_{t\to\infty}\frac{\EE H(t)}{t}$ does exist. Then the
 convergence in $L_{1}$ will follow if one can show
\[\EE H(t)\le At , \quad \forall t>0,
\]
for some positive constant $A$. Using the obvious inequality
\[\PP\{H(t)\ge
k\}=\PP\{X_k(t)>0\}\le \EE X_k(t),
\] we can write
\[
\EE H(t)=\sum_{k=1}^\infty \PP\{H(t)\ge k\}
\le \sum_{k=1}^{k_0} 1 + \sum_{k=k_0+1}^\infty \frac{(\lambda t)^k}{k!}.
\]
Then, taking  $k_0=\lceil \lambda et \rceil$ and using Stirling's
formula, we obtain
\begin{eqnarray*}
\EE H(t) 
  &\le &  k_0+\frac{(\lambda t)^{k_0+1}}{(k_0+1)!}
                   \sum_{k=0}^\infty\Bigl(\frac{\lambda t}{k_0+1}\Bigr)^k
\le \lambda et+\frac{(\lambda t)^{\lambda et+1}}
                    {(\lambda t)^{\lambda et}\sqrt{2\pi \lambda et}}
               \frac{e}{e-1}\\
&\le& \lambda et + \frac{\sqrt{\lambda et}}{\sqrt{2\pi}(e-1)}.
\end{eqnarray*}
\end{proof}

\section{Extension to the multiclass case}\label{MULTI}

\newcommand{\C}{\mathcal{C}}

The extension of the results of section~\ref{DELETE} to models
encompassing several classes of vertices is very tempting, although
not quite evident. We solve hereafter a case where the birth and death
parameters depend on classes in a reasonably general way.

Let $\C$ be a finite set of classes. Then the multiclass Markov chain
$G_\C$ has the following evolution.
\begin{itemize}
\item At any given vertex of class $c$, a new edge of class $c'\in\C$
can be added at the epochs of a Poisson process with parameter
$\lambda_{cc'}\geq0$.
\item Any leaf attached to an edge of class $c'$ and having an
ancestor of class $c$ can be deleted at rate $\mu_{cc'}>0$.
\item The root $v_0$ of the tree belongs to class $c\in\C$, say with
probability $\pi_c$, with $\sum_{c\in\C}\pi_c=1$, albeit these
probabilities will not really matter in our analysis.
\end{itemize}

Let $p_{cc'}$ be the lifetime distribution of a vertex of class $c'$
which descend from a vertex of class $c$. Also, $X_c(t)$ will denote
the distribution of the number of direct descendants of a vertex of
class $c$. The following lemma is the analogous of lemma~\ref{SYS}.

\begin{lemma}
The lifetime distributions $p_{cc'}$, $c,c'\in\C$ satisfy the
following set of equations.
\begin{eqnarray}
\PP\{X_c(t)=0\} 
  &=& \exp\Bigl\{-\sum_{c'\in\C}\lambda_{cc'}
                                     \int_0^t(1-p_{cc'}(x))dx\Bigr\}
      \label{eq.alphaexp},\\
\PP\{X_{c}(t)=0\} 
  &=& \frac{1}{\mu_{bc}}\frac{dp_{bc}(t)}{dt}
      +\int_0^t\PP\{X_c(t-y)=0\} dp_{bc}(y),\ \forall b\in\C, \nonumber
         \\  \label{eq.alphareg}
\end{eqnarray}
with the initial conditions $p_{cc'}(0)=0, \forall c, c'\in\C$.
\end{lemma}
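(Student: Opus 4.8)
The plan is to transcribe the proof of lemma~\ref{SYS} into the multiclass setting, the two new ingredients being the independence of the class-wise arrival streams and the fact that the emptiness probability of a vertex depends only on its own class. First I would fix a vertex $v$ of class $c$ and consider the subtree rooted at $v$; as in lemma~\ref{SYS}, its evolution is independent of everything strictly above $v$ as long as $v$ is alive, so this subtree has a law depending only on $c$, and the lifetime of a class-$c'$ vertex born from a class-$c$ vertex is a well-defined distribution $p_{cc'}$. I would then introduce, for a class-$c$ vertex of age $t$, its number $X_c(t)$ of direct descendants, extended to all $t\ge 0$ by the convention that a death attempt (a \emph{$\mu$-event}) does not stop production. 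The key structural remark is that the rates $\lambda_{cc'}$ governing production and the lifetime laws $p_{cc'}$ of the offspring depend only on $c$; hence the free process $X_c$, and in particular $\PP\{X_c(t)=0\}$, does not depend on the class of the parent of $v$.

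For (\ref{eq.alphaexp}) I would mark each direct descendant by its class and use that the descendants of distinct classes arrive according to \emph{independent} Poisson streams, the class-$c'$ stream having rate $\lambda_{cc'}$. Exactly as in the computation leading to (\ref{PX0}), a class-$c'$ descendant born uniformly in $[0,t]$ is dead at time $t$ with probability $t^{-1}\int_0^t p_{cc'}(x)\,dx$, so summing over the Poisson number of such arrivals gives $\exp\{-\lambda_{cc'}\int_0^t(1-p_{cc'}(x))\,dx\}$ for the class-$c'$ stream to be empty. By independence across $c'$ the joint emptiness probability is the product of these factors, which is precisely (\ref{eq.alphaexp}).

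For (\ref{eq.alphareg}) I would view the same class-$c$ vertex as a descendant of a class-$b$ vertex, so that its first $\mu$-event occurs at rate $\mu_{bc}$ and its epoch $\tau$ is distributed as $p_{bc}$. Following (\ref{DECOMP1}), I split $\PP\{X_c(t)=0\}$ according to $\{\tau\ge t\}$ and $\{\tau<t\}$; the density relation $\frac{dp_{bc}(t)}{dt}=\mu_{bc}\PP\{X_c(t)=0,\tau\ge t\}$ handles the first piece, and the regenerative property of $X_c$ with respect to its $\mu_{bc}$-events, together with $\PP\{\tau\in(y,y+dy)\}=dp_{bc}(y)$, turns the second piece into $\int_0^t\PP\{X_c(t-y)=0\}\,dp_{bc}(y)$. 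Summing the two pieces yields (\ref{eq.alphareg}). The point to watch---and the step I expect to be the genuine obstacle rather than the routine calculations---is the consistency of this last equation \emph{for every} $b\in\C$ with a single left-hand side $\PP\{X_c(t)=0\}$: this is exactly the class-independence of the free process noted above, the parent class $b$ entering only through the death law $p_{bc}$ and the rate $\mu_{bc}$, and never through the production mechanism of $v$.
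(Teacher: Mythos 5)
Your proof is correct and takes essentially the same route as the paper, whose own proof simply says it suffices to mimic lemma~\ref{SYS} and then highlights precisely the point you isolate at the end: the law of the number of descendants of a vertex depends only on its own class, the ascendant's class $b$ entering only through the rate $\mu_{bc}$ and the lifetime law $p_{bc}$, which is what makes (\ref{eq.alphareg}) consistent for every $b\in\C$. Your write-up merely fills in the details the paper omits (the product over independent class-wise Poisson streams for (\ref{eq.alphaexp}), and the regeneration at $\mu_{bc}$-events for (\ref{eq.alphareg})), all in line with the single-class argument.
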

\begin{proof}
Details are omitted, as it suffices to mimic the proof of
lemma~\ref{SYS}. Note however that the dependency with respect to $c'$
disappears surprisingly enough in (\ref{eq.alphareg}). Indeed, while
the lifetime of a vertex depends on the class of its direct ascendant,
the distribution of the number of its descendants merely depends on
its own class.
\end{proof}

In the setting of this section, it is actually not easy to come up
with a natural explicit extension of theorem~\ref{TH2}. However, the
following theorem provides a necessary and sufficient condition for
ergodicity.

The following notation will be useful in the theorem:
\begin{eqnarray*}
  \rho_{cc'}&\egaldef&\frac{\lambda_{cc'}}{\mu_{cc'}},\ \forall c,
  c'\in\C\\
  \rho_c&\egaldef&\sum_{c'\in\C}\rho_{cc'},\ \forall c\in\C.
\end{eqnarray*}

We will also denote by $\rho\ge0$ the Perron-Frobenius eigenvalue
(see~\cite{GAN}) of the non-negative matrix
$\bigl(\rho_{cc'}\bigr)_{c,c'\in\C}$.

\begin{theorem}\mbox{}\label{th-MULTI}
\begin{enumerate} 
\item\label{item.ergomulti} The Markov chain $G_\C$ is ergodic if, and
  only if, the nonlinear system
\begin{equation}\label{eq.rc}
  y_c =\sum_{d\in\C}\rho_{cd} \,\exp\{y_d\},\ \forall c\in\C,
\end{equation}
has at least one real-valued (and obviously non-negative) solution. In
this case, the mean lifetime of a vertex of class $c$ with an ascendant
of class $b$ can be written as
\begin{equation}\label{eq.mc}
  m_{bc}  = \frac{1}{\mu_{bc}}\exp\{r_c\}\,,
\end{equation}
where the $r_c$ form the smallest solution of~(\ref{eq.rc}), that is
$r_c\leq y_c$, $\forall c\in\C$. Note that (\ref{eq.rc}) implies that
$r_c\geq \rho_c$.
\item\label{item.ergomulti.suf} A simple sufficient condition for
  ergodicity is
\begin{equation}\label{eq.ergomulti.suf}
  \rho_c\leq\frac{1}{e},\ \forall c\in\C
\end{equation}
in which case $r_c\leq \rho_c e$.
\item\label{item.ergomulti.nec} A simple necessary condition for
  ergodicity is
\begin{equation}\label{eq.ergomulti.nec}
  \rho \leq\frac{1}{e}.
\end{equation} 
\end{enumerate}
\end{theorem}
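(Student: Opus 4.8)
The plan is to reduce ergodicity to the finiteness of the mean lifetimes $m_{bc}\egaldef\int_0^\infty(1-p_{bc}(x))\,dx$, exactly as in the scalar case, and then to turn the equations of the preceding lemma into the fixed-point system (\ref{eq.rc}). First I would invoke the regenerative structure at the $\mu$-events of the root (there are finitely many classes, so the argument of Section~\ref{ERGODICITY} carries over) to assert that $G_\C$ is ergodic if and only if every $m_{bc}<\infty$. To extract the $m_{bc}$, take Laplace transforms in (\ref{eq.alphareg}): writing $\alpha_c^*(s)$ for the transform of $\PP\{X_c(t)=0\}$ and using $p_{bc}(0)=0$, one gets $sp_{bc}^*(s)=\mu_{bc}\alpha_c^*(s)/(1+\mu_{bc}\alpha_c^*(s))$. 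An Abelian passage $s\to0$ as in (\ref{NEC}), combined with (\ref{eq.alphaexp}), yields $m_{bc}=\bigl(\mu_{bc}\alpha_c(\infty)\bigr)^{-1}$ with $\alpha_c(\infty)=\exp\{-\sum_{c'}\lambda_{cc'}m_{cc'}\}$. Setting $r_c\egaldef\sum_{c'}\lambda_{cc'}m_{cc'}$ gives simultaneously (\ref{eq.mc}), i.e.\ $m_{bc}=\mu_{bc}^{-1}e^{r_c}$, and, after substituting $m_{cc'}=\mu_{cc'}^{-1}e^{r_{c'}}$, the system $r_c=\sum_{c'}\rho_{cc'}e^{r_{c'}}$, which is precisely (\ref{eq.rc}). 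This already settles the forward implication of part~\ref{item.ergomulti}: ergodicity forces finite $r_c$ solving (\ref{eq.rc}), and $r_c\ge\rho_c$ is immediate from $e^{r_{c'}}\ge1$.

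For the converse — existence of a real solution of (\ref{eq.rc}) implying ergodicity — I would mimic the scheme (\ref{ITE}), now coupling one \textsc{m/g/$\infty$} queue per class. The induced operator on mean vectors is $F:[0,\infty)^\C\to[0,\infty)^\C$, $F(y)_c=\sum_{d}\rho_{cd}e^{y_d}$, which is coordinatewise nondecreasing since $\rho_{cd}\ge0$. Starting at $y^{(0)}=0$ gives $y^{(1)}_c=\rho_c\ge0$, so $(y^{(k)})$ is nondecreasing; if $y\ge0$ is any solution of (\ref{eq.rc}), then $y^{(0)}\le y$ forces $y^{(k)}\le y$ for all $k$ by monotonicity, so the bounded increasing sequence converges to the \emph{smallest} fixed point $r\le y$, giving the characterization $r_c\le y_c$ of part~\ref{item.ergomulti}. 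The genuinely delicate point — and the main obstacle — is to show that this monotone recursion on the means is the shadow of a monotone, convergent recursion on the \emph{lifetime distributions} $p_{bc}$ themselves (a Beppo-Levi and coupling argument across classes, as in steps (i)--(ii) of Section~\ref{ERGODICITY}), so that when (\ref{eq.rc}) is solvable the limiting $p_{bc}$ are proper with finite means $m_{bc}=\mu_{bc}^{-1}e^{r_c}<\infty$, hence ergodicity; and conversely, when (\ref{eq.rc}) has no solution, some coordinate of $y^{(k)}$ diverges and the corresponding subtree is transient.

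Part~\ref{item.ergomulti.suf} is then a short \emph{super-solution} check. Under $\rho_c\le e^{-1}$ put $\bar y_c\egaldef\rho_c e$. Since $\rho_{c'}e\le1$ gives $e^{\bar y_{c'}}=e^{\rho_{c'}e}\le e$, one has $F(\bar y)_c=\sum_{c'}\rho_{cc'}e^{\rho_{c'}e}\le e\sum_{c'}\rho_{cc'}=\rho_c e=\bar y_c$, so $\bar y$ is a super-solution. As $y^{(1)}_c=\rho_c\le\rho_c e=\bar y_c$, monotonicity keeps the whole iteration inside $[0,\bar y]$, and its limit $r$ is a solution of (\ref{eq.rc}) with $r_c\le\rho_c e$, establishing both ergodicity and the stated bound.

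Finally, for the necessary condition of part~\ref{item.ergomulti.nec} I would use the left Perron--Frobenius eigenvector. By the theory cited in~\cite{GAN} there is $w\ge0$, $w\ne0$, with $\sum_c w_c\rho_{cd}=\rho\,w_d$. Multiplying (\ref{eq.rc}) by $w_c$ and summing over $c$ gives $\sum_c w_c r_c=\sum_{d}\bigl(\sum_c w_c\rho_{cd}\bigr)e^{r_d}=\rho\sum_d w_d e^{r_d}$. The elementary inequality $e\,x\le e^{x}$ (equality at $x=1$) yields $r_d\le e^{-1}e^{r_d}$, whence $\sum_d w_d r_d\le e^{-1}\sum_d w_d e^{r_d}$. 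Since $\sum_d w_d r_d=\rho\sum_d w_d e^{r_d}$ and $\sum_d w_d e^{r_d}>0$ (as $w$ is nonnegative and nonzero), dividing gives $\rho\le e^{-1}$, as required. The only subtlety is guaranteeing this positivity, which is automatic here.
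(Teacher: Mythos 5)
Your proposal is correct, and for items~\ref{item.ergomulti} and~\ref{item.ergomulti.suf} it coincides with the paper's own proof: necessity comes from the same Abelian passage $t\to\infty$ in (\ref{eq.alphaexp})--(\ref{eq.alphareg}) giving $m_{bc}=\mu_{bc}^{-1}\exp\{r_c\}$ with $r_c=\sum_{c'}\lambda_{cc'}m_{cc'}$, and sufficiency from the same monotone iteration $r_{c;k+1}=\sum_{c'}\rho_{cc'}\exp\{r_{c';k}\}$, $r_{c;0}=0$, dominated coordinatewise by any solution of (\ref{eq.rc}); the ``delicate point'' you flag (lifting the recursion on means to a convergent recursion on the lifetime distributions) is exactly what the paper supplies via the distribution-level scheme (\ref{eq.itemulti}), the multiclass analogue of (\ref{ITE}), and your super-solution check $\bar y_c=\rho_c e$, $F(\bar y)\le\bar y$ is precisely the paper's bound for item~\ref{item.ergomulti.suf}. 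The only genuine divergence is item~\ref{item.ergomulti.nec}: you pair (\ref{eq.rc}) with a non-negative \emph{left} Perron--Frobenius eigenvector $w$ and the inequality $ex\le e^x$ to get $\rho\sum_d w_d e^{r_d}=\sum_d w_d r_d\le e^{-1}\sum_d w_d e^{r_d}$, then divide; the paper instead invokes the Collatz--Wielandt bound $\rho\le\max_{c\in\mathcal{C}}\sum_{c'\in\mathcal{C}}\rho_{cc'}x_{c'}/x_c$ with the test vector $x_c=\exp\{r_c\}$, which yields $\rho\le\max_{c\in\mathcal{C}} r_c e^{-r_c}\le e^{-1}$ directly. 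The two arguments are equivalent one-line applications of the theory in~\cite{GAN}: yours requires the existence of a non-negative left eigenvector for a possibly reducible non-negative matrix (true, and your positivity observation $\sum_d w_d e^{r_d}>0$ is indeed the point where reducibility could bite), while the paper's inequality holds verbatim for any positive test vector; neither buys more generality, and both hinge on the same elementary fact $xe^{-x}\le e^{-1}$.
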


\paragraph{Remark} Before stating the proof of the theorem, it is
worth pointing out that equation~(\ref{eq.rc}) may in general have
several real solutions (as in dimension $1$). Therefore, there is no
guarantee that the solution $y_c$ is the correct one. However, its
sole existence proves ergodicity and (\ref{eq.mc}).

\begin{proof}
Assume first that $G_\C$ is ergodic. Then, as in
theorem~\ref{TH2}, we let $t\to\infty$ in
(\ref{eq.alphaexp})--(\ref{eq.alphareg}) to obtain the relation
\[
  \frac{1}{\mu_{bc} m_{bc}} 
         = \exp\Bigl\{-\sum_{c'\in\C}\lambda_{cc'}m_{cc'}\Bigr\},
\]
which in its turn yields (\ref{eq.rc}) and (\ref{eq.mc}), just choosing
\[
  y_c = r_c \egaldef\sum_{c'\in\C}\lambda_{cc'}m_{cc'}.
\]

As for the proof of sufficiency in item~\ref{item.ergomulti}, we
introduce the following modified version of scheme~(\ref{ITE}):
\begin{equation}\label{eq.itemulti}
\begin{cases}
 p_{cc';0}(t) & = 1,\ t\ge0 \,, \\[0.2cm] 
 \alpha_{c;k+1} (t) & = \
 \DD\exp\Bigl\{-\sum_{c'\in\C}\lambda_{cc'}
 \int_0^t\bigl(1-p_{cc';k}(y)\bigr)dy\Bigr\},\ k\ge 0, \\[0.4cm]
 \alpha_{c;k}(t) & = \ \DD\frac{1}{\mu_{bc}}\frac{dp_{bc;k}(t)}{dt} 
                     + \int_0^t\alpha_{c;k}(t-y)dp_{bc;k}(y),\ k\ge1,\\[0.3cm] 
 p_{cc';k}(0) & = \ 0, \  k \ge 1\,.
\end{cases}
\end{equation}
Then, for any $b,c\in\C$, we have
\[
\alpha_{c;1}(t) = 1 \quad\mbox{and}\quad
p_{bc;1}(t) = 1-e^{-\mu_{bc}t}\ \leq\ p_{bc;0}(t).
\]
Here again, the positive sequences $\{p_{cc';k}(t);k\geq 0\}$ and
$\{\alpha_{c;k}(t); k\geq 0\}$ are uniformly bounded and
non-increasing, for each fixed $t>0$. Consequently,
\[
p_{cc'}(t) = \limdec_{k\to\infty} p_{cc';k}(t) 
\quad \textrm{and} \quad
\alpha_c(t) = \limdec_{k\to\infty} \alpha_{c;k}(t)
\]
form the unique solution of (\ref{eq.alphaexp})--(\ref{eq.alphareg}),
uniqueness resulting from the Lipschitz character of equation
(\ref{eq.alphareg}).

Letting $m_{cc';k}$ denote the finite mean associated with each distribution
$p_{cc';k}$ and 
\[
 r_{c;k} \egaldef \sum_{c'\in\C} \lambda_{cc'}m_{cc';k} \,,
\]
we can write the following recursive equation
\[
  r_{c;k+1}=\sum_{c'\in\C}\rho_{cc'}\exp\{r_{c';k}\},\ \forall c\in\C.
\]

The $p_{cc';k}$'s are decreasing sequences, and hence the $r_{c;k}$'s
are non-decreasing, with $r_{c;0}=0, \forall c\in \C$. If
(\ref{eq.rc}) has a solution, then the relation
\[
  r_{c;k+1} - y_c \,=\,
     \sum_{c'\in\C}\rho_{cc'}\bigl[\exp\{r_{c';k}\}-\exp\{y_{c'}\}\bigr]
\]
yields $r_{c;k}\leq y_c$, for all $c\in\C$. Therefore, each
sequence $r_{c;k}$ converges as $k\to\infty$ to a finite value
$r_c\leq y_c$, and $G_\C$ is ergodic since, by (\ref{eq.mc}), the
$m_{cc'}$'s are also finite. When (\ref{eq.ergomulti.suf}) holds, the same
line of argument shows that the sequences $r_{c;k}$ are non-decreasing
and bounded from above by $\rho_c e$.

Finally, to prove~(\ref{eq.ergomulti.nec}), we use the following
inequality (see~\cite{GAN}), valid for any $x_c>0$, $c\in\C$:
\[
  \rho \leq \max_{c\in\C} \sum_{c'\in\C} \frac{\rho_{cc'}x_{c'}}{x_c}.
\]

When the $r_c$'s satisfy (\ref{eq.rc}), the choice $x_c=\exp\{r_c\}$
implies
\[
 \rho \leq \max_{c\in\C} \biggl[\sum_{c'\in\C} 
                  \rho_{cc'}\exp\{r_{c'}\}\exp\{-r_c\}\biggr]
      = \max_{c\in\C} \Bigl[r_c \exp\{-r_c\}\Bigr] \leq \frac{1}{e},
\]
which concludes the proof of the theorem.
\end{proof}

It is possible to extend the results of section~\ref{DISTRIBUTIONS} to
the multiclass case. We will only sketch the proofs in what follows,
since they are very similar to their single class counterparts. At time
$t>0$, let $N_{cd}(t)$ be the number of vertices of class $d$ inside a
tree, the root of which is of class $c$. Then, as in proof of
theorem~\ref{thm.lawN}, if $z_{c}$ is a complex number such that
$|z_{c}|<1$, $\forall c\in\C$,
\[
 \EE\Bigl[\prod_{d\in\C}z_{d}^{N_{cd}(t)}\Bigr]
   =  z_c \exp\biggl\{\sum_{c'\in\C}\lambda_{cc'} 
                \EE \Bigl[\int_0^t 
                    \Bigl(\prod_{d\in\C}z_{d}^{N_{c'd}(x)}-1\Bigr)
                     \ind{x\leq\tau_{cc'}}dx\Bigr]\biggr\}
\]

Assume  the system is ergodic, let $N_{cd}\egaldef
\lim_{t\to\infty}N_{cd}(t)$ and 
\[
 \phi_c(\vec{z})\egaldef \EE\Bigl[\prod_{d\in\C}z_{d}^{N_{cd}}\Bigr].
\]

Then computations similar to the ones in theorem~\ref{thm.lawN} yield
\begin{equation}\label{eq.lawNc}
 \phi_c(\vec{z})=z_c\exp\biggl\{\sum_{c'\in\C}\lambda_{cc'}m_{cc'}
    (\phi_{c'}(\vec{z})-1)\biggr\},\ c\in\C.
\end{equation}

Unfortunately, no closed form solution is known for $\phi_c$ from this
equation. It is however possible, as for (\ref{eq.meanN}), to write
down a system of equations for the expectations of the $N_{cd}$'s.
\[
  \EE \bigl[N_{cd}\bigr] 
  = \ind{c=d}+\sum_{c'\in\C}\rho_{cc'}\exp\{r_{c'}\}\EE\bigl[N_{c'd}\bigr].
\]

This system admits of a non-negative matrix solution if, and only if,
the Perron-Frobenius eigenvalue of the matrix
\[
  M\egaldef\Bigl(\rho_{cc'}\exp\{r_{c'}\}\Bigr)_{c, c'\in\C}
\]
is smaller than $1$. A simple necessary condition for this to hold is
(\ref{eq.ergomulti.suf}).

Finally, the same line of argument allows to extend (\ref{eq.lawH}). If $H_c$
is the height in stationary regime of a tree which root is of class
$c\in\C$, then
\begin{eqnarray*}
\PP\{H_c= 0\} 
  & = &  e^{-r_c} , \\
\PP\{H_c> h+1\} 
  & = & 1-\exp\biggl[-\sum_{c'\in\C}\rho_{cc'}\exp\{r_{c'}\}
                                     \PP\{H_{c'}> h\}\biggr],
        \quad \forall h \ge 0 \,.
\end{eqnarray*}

\begin{acknowledgement}
The authors thank V.A.~Malyshev for bringing the single-class problem
to their attention and Th.~Deneux for skillful and useful numerical
experiments. They also want to thank the anonymous referee for his
(her) careful reading of the manuscript.
\end{acknowledgement}

\end{document}